\documentclass[a4paper]{amsart}

\usepackage{amsmath,amssymb,amsthm}
\usepackage[all]{xy}
\usepackage{graphicx}
\usepackage[colorlinks=true,backref=page]{hyperref}
\usepackage[utf8]{inputenc}

\newtheorem{theorem}{Theorem}[section]
\newtheorem{proposition}[theorem]{Proposition}
\newtheorem{lemma}[theorem]{Lemma}
\newtheorem{corollary}[theorem]{Corollary}

\theoremstyle{definition}
\newtheorem{definition}[theorem]{Definition}

\newtheorem{conjecture}[theorem]{Conjecture}

\newtheorem{question}[theorem]{Question}

\theoremstyle{remark}

\numberwithin{equation}{section}

\newcommand{\Z}{\mathbb{Z}}

\newcommand{\TC}{\mathsf{TC}}
\newcommand{\cat}{\mathsf{cat}}
\newcommand{\secat}{\mathsf{secat}}
\newcommand{\wgt}{\mathsf{wgt}}
\newcommand{\Hom}{\mathrm{Hom}}
\newcommand{\Ext}{\mathrm{Ext}}
\newcommand{\id}{\mathrm{id}}

\title[TC of non-simply connected spaces]{On the topological complexity of non-simply connected spaces}

\author[Y. Minowa]{Yuki Minowa}
\address{Department of Mathematics, Kyoto University, Kyoto, 606-8502, Japan}
\email{minowa.yuki.48z@st.kyoto-u.ac.jp}

\begin{document}

\begin{abstract}
  Topological complexity is a numerical homotopy invariant that measures the instability of motion planning in a space. To study the topological complexity of non-simply connected spaces, Costa and Farber introduced a cohomology class whose nilpotency gives a lower bound of topological complexity. Farber and Mescher constructed a spectral sequence that evaluates this nilpotency without direct computation. We extend these results with respect to a group homomorphism. As an application, we determine the topological complexity of some 3-manifolds with nonabelian fundamental group.
\end{abstract}

\maketitle

\section{Introduction}\label{introduction}

\emph{Topological complexity} is a numerical homotopy invariant introduced by Farber \cite{F} to describe the instability in the motion planning problem. Let $\mathcal{P} X = \{\gamma \colon [0, 1] \to X \}$ denote the path space, and consider the evaluation map
\[
\Pi\colon \mathcal{P}X \to X^2,\quad\gamma \mapsto \left(\gamma(0),\gamma(1) \right).
\]
The topological complexity of a space $X$, denoted by $\TC(X)$, is defined as the minimal integer such that $X^2$ is covered by $k+1$ open sets having local section of $\Pi$. If such an integer does not exist, then we set $\TC(X)=\infty$.

Since its introduction, topological complexity has been studied intensely in several contexts, and has been generalized in several directions. Among them, one of the core interests is a description of the topological complexity of a non-simply connected space, especially $K(\pi, 1)$-space, in terms of its fundamental group. Costa and Farber \cite{CF} introduced a first cohomology class with local coefficients, and provided upper and lower bounds of topological complexity by the nilpotency of the class. This idea can be regarded as an analog of classical theory on the LS-category of $K(\pi, 1)$-space by Berstein \cite{Be}, \v{S}varc \cite{Sv}, Dranishnikov and Rudyak \cite{DR}. However, it is quite hard to apply these bounds into practice, since the coefficients become significantly intricate as the nilpotency increases. There have been some computational results for spaces with abelian fundamental group, see \cite{ACGHV, CV}. 

Farber and Mescher \cite{FM} considered a spectral sequence that evaluates the above nilpotency without direct computation. For a discrete group $G$, they constructed an exact couple
\[
\cdots\xrightarrow{\delta} D_{r,s}\xrightarrow{\varepsilon}E_{r, s}\to D_{r, s+1}\xrightarrow{\delta} D_{r+1, s}\to \cdots
\]
such that the nontriviality of $\mathrm{Ker}(\varepsilon^{(n)})$ in the $n$-th derived couple
\[
    \cdots\xrightarrow{\delta^{(n)}} D^{n}_{r,s}\xrightarrow{\varepsilon^{(n)}}E^{n}_{r-n+1, s+n-1}\to D^{n}_{r-n+1, s+n}\xrightarrow{\delta^{(n)}} D^{n}_{r-n+2, s+n-1}\to \cdots,
\]
implies $\TC(BG)\ge n-1$. They also provided an explicit description of $E_{r, s}$ in terms of group cohomology. As a result, they obtained a strong evaluation of $\TC(BG)$ for a Gromov hyperbolic group $G$. B\l aszczyk et al. \cite{BCE} and Espinosa Baro et al. \cite{EFMO} generalized this spectral sequence to address variants of the motion planning problem. Nevertheless, this method has not been practically utilized for the computation of topological complexity. One possible reason is that the differentials of the spectral sequence were not precisely given, and so it is difficult to depict $E^n$ for $n\ge 2$ in general cases. Another explanation is that their construction is not functorial, which restrains us from applying abundant tools of group cohomology.

In this paper, we propose a new computational method grounded in these developments; we reconstruct their arguments using techniques from homological algebra. We investigate the property of a pair of adjoint functors associated with a group homomorphism, and the derived homomorphisms of $\Ext$ groups. Our main purpose is to extend the above exact couple with respect to a group homomorphism, so as to facilitate the application of group cohomology. By utilizing our method, we determine the topological complexity of certain non-simply connected $3$-manifolds. As in \cite[Theorem 2]{M}, the \emph{general quaternion group} $Q_{8m}$ acts on $S^3$ without fixed point. Then the quotient $S^3/Q_{8m}$ is a $3$-manifold with  fundamental group $Q_{8m}$.
\begin{theorem}
    \label{main}
    For $m\ge 1$, the topological complexity of $S^3/Q_{8m}$ is $6$.
\end{theorem}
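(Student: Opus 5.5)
Since $S^3/Q_{8m}$ is a closed $3$-manifold, the upper bound is the standard dimensional one, $\TC(X)\le 2\dim X=6$. The work is therefore the lower bound $\TC(S^3/Q_{8m})\ge 6$.

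The manifold is not aspherical, so the Costa--Farber/Farber--Mescher machinery for $BG$ does not apply directly. I would use the classifying map $f\colon X=S^3/Q_{8m}\to BQ_{8m}$. This map is $3$-connected: the universal cover is $S^3$, so $\pi_2=0$, and $f$ is an iso on $\pi_1$. Hence $f^*$ is injective in degree $6$ on $X^2\to (BG)^2$ with local coefficients in the relevant sense. More precisely, the Costa--Farber canonical class $\mathfrak v\in H^1(X^2;I)$, where $I$ is the augmentation ideal of $\Z[G]$ viewed as a $G\times G$-module, is pulled back from $BG\times BG$. So the lower bound follows once I show $\mathfrak v^6\neq 0$ in $H^6(X\times X; I^{\otimes 6})$.

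To detect this I would use the paper's extended exact couple relative to a group homomorphism. The natural choices are the inclusion $\varphi\colon G\to G\times G$ (diagonal, or first factor), or a restriction to a subgroup, for example the cyclic subgroup $\Z/4m\subset Q_{8m}$ or $Q_8$. Applying the adjoint-functor machinery converts nonvanishing of $\mathfrak v^6$ into a nonvanishing statement about an $\Ext$ group, i.e.\ group cohomology of $Q_{8m}$ with computable coefficients. The key input is that $Q_{8m}$ has periodic cohomology of period $4$, generated by a class $u\in H^4(G;\Z)$ with $H^4(G;\Z)\cong\Z/8m$. In $X$ this class restricts to zero since $H^4(X)=0$, but on $X\times X$ the relevant degree-$6$ cohomology (via Poincaré duality of $X^2$, twisted) is still nontrivial.

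I would set this up concretely in three steps:
\begin{enumerate}
\item Compute $H^6(X^2;M)$ for $M=I^{\otimes 6}$. Via twisted Poincaré duality this is $H_0(G\times G; M\otimes\text{or})$, the coinvariants, which is essentially $(I^{\otimes 6})_{G\times G}$.
\item Show that $\mathfrak v^6$ maps to a nonzero element there.
\item Argue, using the derived couple, that the Farber--Mescher obstruction, $\ker\varepsilon^{(n)}\neq 0$ at the right stage, survives restriction to the $3$-skeleton (here the top cell of $X^2$).
\end{enumerate}

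The main obstacle is step (2)/(3): computing the differentials in the derived couple up to the sixth page for the nonabelian $Q_{8m}$. My first attempt would be to exploit functoriality. I would map to a subgroup or quotient where the computation is known, such as $\Z/4m$, for which lens-space results (e.g.\ $\TC$ of lens spaces equals $6$ in suitable cases) show nonvanishing. I would then check that the extended exact couple transports a nonzero top class compatibly. If restriction loses information, I would instead compare with the abelianization $Q_{8m}\to \Z/2\times\Z/2$ and use mod-$2$ coefficients, where the top class of $X\times X$ with $\Z/2$ coefficients is a product of explicit degree-one and degree-two classes.
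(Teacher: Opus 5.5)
You have the reduction right: $\TC(X)\le 2\dim X=6$, and by Costa--Farber, for the $3$-dimensional $X=S^3/Q_{8m}$, $\TC(X)=6$ is equivalent to $\mathfrak{v}_X^6\neq 0$. Beyond that, however, the proposal only lists candidate strategies, and the step you flag as ``the main obstacle'' is the whole content of the theorem. No argument for $\mathfrak{v}_X^6\neq 0$ is given.

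\textbf{What is missing.} You need degree-$3$ classes lying one filtration step deeper than products of zero-divisors give. Plain mod-$2$ zero-divisor cup length does not reach $6$ here: every product of six degree-one zero-divisors vanishes on $X^2$. The paper gets these classes through the quotient $\varpi\colon Q\to V=(\Z/2)^2$.
\begin{itemize}
\item Each term of $v_i\in H^3(V^2;\Z/2)$ has two zero-divisor factors, so $\varpi^*v_i\in D^3_{3,0}(Q)$.
\item The remaining obstruction $\varepsilon^{(3)}(\varpi^*v_i)\in E^3_{1,2}(Q)$ is shown to vanish. One pulls back a representative $z_i\in E^1_{1,2}(V)\cong\prod H^1(C_V(c,d))$ and uses the naturality of the centralizer decomposition (Theorem~\ref{decomp}).
\item Components indexed by non-commuting pairs $(a,b)$ die, because $C_Q(a,b)=\{\pm1\}\subset\mathrm{Ker}\,\varpi$.
\item The diagonal components of $z_i$ are pinned down by restricting to the subgroups $\langle c\rangle\cong\Z/2$, where $E^3_{1,2}=E^1_{1,2}$. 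They then die on $C_Q(a)$ by Lemma~\ref{V gens}.
\end{itemize}
Hence $\varpi^*v_i\in D^4_{3,0}(Q)$. A product of two such classes lies in $D^7_{6,0}(Q)$, i.e.\ has the form $\mathbf{ev}_*(\mathfrak{v}^6\smile u)$. Its image in $H^6(X^2;\Z/2)$ is computed directly to be $x^2y\otimes xy^2$ ($m$ odd) or $x^3\otimes y^3$ ($m$ even). Naming the abelianization and mod-$2$ coefficients as a fallback supplies none of this.

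\textbf{Steps you do state that would fail.}
\begin{itemize}
\item The injectivity claim is false. $f\times f\colon X^2\to(BQ)^2$ is only $3$-connected, so it says nothing in degree $6$. The map $H^6((BQ)^2;\Z/2)\to H^6(X^2;\Z/2)$ has a large kernel; for example $e\otimes x^2\mapsto 0$ since $H^4(X)=0$. This is exactly why any candidate class must be checked on $X^2$ itself.
\item Restricting to $\Z/4m$ or $Q_8$ goes the wrong way. A homomorphism $\alpha\colon H\to G$ transports the exact couple from $G$ to $H$ (Lemma~\ref{Ext exact hom}). So restriction can detect classes already built on $Q^2$, but cannot manufacture them. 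Nothing in your outline transfers a lower bound for the lens-space double cover $S^3/(\Z/4m)$ back to $S^3/Q_{8m}$. To create classes on $Q^2$ you need a map out of $Q$, which is why the paper uses $\varpi$.
\item Twisted Poincar\'e duality correctly identifies $H^6(X^2;I^{\otimes 6})$ with coinvariants, but gives no handle on the image of $\mathfrak{v}_X^6$ there. The period-$4$ class plays no role in the argument.
\end{itemize}
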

\noindent
Remark that Iwase and Miyata \cite{IM} computed $\TC(S^3/Q_8)$ by analysing a cell structure of $S^3/Q_8$.

This paper is structured as follows. Section \ref{top pre} recalls preliminary results on the cohomological estimation of topological complexity. Section \ref{alg pre} prepares some terminology and lemmas on homological algebra. Section \ref{extension} provides an extension of the result of Farber and Mescher. Section \ref{appl} computes $\TC(S^3/Q_{8m})$ by our method and proves Theorem \ref{main}. Finally, Section \ref{rem} lists some remarks and problems that could lead to further investigation.

\subsection*{Notation and conventions}
Throughout this paper, all groups are assumed to be discrete. We denote the integral group ring of a group $G$ by $\Z G$, and the abelian category of left $G$-modules, i.e. $\Z G$-modules, by $G\textbf{-Mod}$. For a $(G, H)$-bimodule $M$ and a left $H$-module $N$, their tensor product over $\Z G$ is denoted by $M\otimes_G N$, regarded as a left $G$-module by the left action on $M$. For left $G$-modules $M$ and $N$, their tensor product over $\Z$ is denoted by $M\otimes N$, equipped with the diagonal $G$-action
\[
g\cdot(m\otimes n):= (g\cdot m)\otimes (g\cdot n)\text{ for }g\in G, m\in M\text{ and }n\in N.
\]

\subsection*{Acknowledgement}
The author is grateful to Daisuke Kishimoto and Toshiyuki Akita for their valuable comments. The author was supported by JST SPRING, Grant Number JPMJSP2110.

\section{Topological preliminaries}\label{top pre}

\subsection{Sectional category and $\TC$-weight}

We first recall the concept of sectional category. The \emph{sectional category} of a map $f\colon X\to Y$, denoted by $\secat(f)$, is the minimal integer $k$ such that $Y$ is covered by $k+1$ open sets over each of which $f$ has a right homotopy inverse. If such an integer does not exist, then we set $\secat(f)=\infty$. Then by definition, there is an identity
\[
\TC(X)=\secat(\Pi\colon \mathcal{P} X \to X^2).
\]
Let $\Delta\colon X\to X^2$ denote the diagonal map. Then since there is a homotopy commutative diagram (cf. \cite[Theorem 7]{F})
\[
  \xymatrix{
     \mathcal{P} X\ar[r]^{\Pi}\ar[d]_\simeq&X^2\ar@{=}[d]\\
    X\ar[r]^{\Delta}&X^2,
  }
\]
we get $\TC(X)=\secat(\Delta\colon X\to X^2)$.

We next recall a cohomological lower bound of topological complexity. Farber and Grant \cite{FG} introduced the $\TC$-weight of a cohomology class, which is an analog of the category weight for LS-category \cite{FH}. Let $X$ be a space with fundamental group $G=\pi_1(X)$, and let $A$ be a $G^2$-module. Then for a space $Y$ and a map $f\colon Y\to X^2$, we obtain a group homomorphism $\pi_1(Y)\to \pi_1(X^2)=G^2$. We denote by $U_f(A)$ the restriction of $A$ by this homomorphism, which is a $\pi_1(Y)$-module; we will provide a precise definition in Section \ref{alg pre 1}.

\begin{definition}(\cite[Definition 1]{FM})
    The $\TC$-\emph{weight} of a nontrivial cohomology class $u\in H^*(X^2; A)$, denoted by $\wgt(u)$, is the maximal integer $k$ such that
    \[
    f^*(u) = 0 \in H^*(Y;U_f(A))\text{ for any map }f\colon Y\to X^2\text{ with }\secat(f^*\Pi)< k.
    \]
\end{definition}
\noindent
We list some essential properties on this invariant. For proofs, see \cite{FG}.

\begin{proposition}
    \label{wgt props}
    Let $X$ be a space as above.
    \begin{enumerate}
        \item For any $u\in H^*(X^2; A)$, $\wgt(u) \le |u|$.
        \item If $\wgt(u) = k$ for some $u\in H^*(X^2; A)$, then $\TC(X)\ge k$.
        \item For $i = 1,\cdots, m$, let $u_i\in H^*(X^2; A_i)$ be cohomology classes with
        \[
        u_1\smile \cdots\smile u_i \neq 0\in H^*(X^2; A_1\otimes \cdots\otimes A_i).
        \]
        Then
        \[
        \wgt(u_1\smile \cdots\smile u_i) \ge \sum_{i=1}^{m}\wgt(u_i).
        \]
        \item Let $u\neq 0\in H^*(X^2; A)$ be a zero-divisor, i.e. a cohomology class satisfying $\Delta^*(u) = 0\in H^*(X; U_\Delta A)$. Then $\wgt(u)\ge 1$.
    \end{enumerate}
\end{proposition}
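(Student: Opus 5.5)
The four items follow the Farber--Grant pattern for category weight, adapted to local coefficients. All of them come from two facts. Cohomology with local coefficients is natural: for $f\colon Y\to X^2$ we have $f^*(u\smile v)=f^*u\smile f^*v$, with coefficients $U_f(A\otimes B)=U_f(A)\otimes U_f(B)$. And sectional category behaves well under pullback and under unions of open sets. The plan is to prove (4), (1), (3) and (2) in that order, as follows.

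For (4), suppose $f\colon Y\to X^2$ has $\secat(f^*\Pi)<1$, i.e. $\secat(f^*\Pi)=0$. Then $f^*\Pi$ has a homotopy section, so $f$ factors up to homotopy through $\Pi$. Using the homotopy equivalence $\mathcal{P}X\simeq X$ with $\Pi\simeq\Delta$, this gives $f\simeq\Delta\circ g$ for some $g\colon Y\to X$. The homotopy induces an isomorphism of the pulled-back local systems, which is compatible with $\pi_1(Y)\to G^2$ up to conjugation. Hence $f^*u$ corresponds to $g^*\Delta^*u=0$, and so $\wgt(u)\ge1$.

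For (1), write $n=|u|$ and take $f$ with $\secat(f^*\Pi)<n+1$. I would show that $f^*u$ need not vanish. The natural test map is a CW approximation: take $Y$ to be the $n$-skeleton-type space, or better $Y=X^2$ itself with $f=\mathrm{id}$. We have $\secat(\Pi)=\TC(X)$, but that need not be $\le n$. Instead, use the standard fact that the $j$-fold fibrewise join $\Pi_j$ of $\Pi$ (Schwarz) has a section if and only if $\secat\le j-1$, and that the fibre of $\Pi_j$ is $(j-2)$-connected. Pulling back $u$ along the total space $p\colon E_{n+1}\to X^2$ of $\Pi_{n+1}$, the fibre is $(n-1)$-connected. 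Then $p^*$ is injective on $H^n$ with local coefficients, by the Serre spectral sequence with local coefficients or by obstruction theory. At the same time $\secat(p^*\Pi)\le n<n+1$. So $p^*u\ne0$ while the hypothesis of weight $n+1$ would force it to vanish. Therefore $\wgt(u)\le n$.

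For (3), let $f$ satisfy $\secat(f^*\Pi)<\sum_i k_i$, where $k_i=\wgt(u_i)$. Split $Y$ into $m$ open sets $V_i$, each a union of $k_i$ of the sectioning opens, so that $\secat(f|_{V_i}^*\Pi)<k_i$. Then $f^*u_i$ restricts to zero on $V_i$, and so it lifts to the relative group $H^*(Y,V_i;U_fA_i)$. The relative cup product lands in $H^*(Y,\bigcup_i V_i;\cdot)=0$, so $f^*(u_1\smile\cdots\smile u_m)=0$. To make this work I need $V_i$ to be open, and I need restriction to $V_i$ to fit the weight definition. Formally, the definition quantifies over maps $Y\to X^2$, and the inclusion $V_i\hookrightarrow Y$ composed with $f$ is such a map.

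For (2), apply the definition with $f=\mathrm{id}_{X^2}$. If $\TC(X)<k$, then $u=\mathrm{id}^*u=0$, which contradicts nontriviality; hence $\TC(X)\ge k$.

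I expect the main obstacle to be (1), specifically making injectivity with local coefficients rigorous via the connectivity of the join fibre. That is the step where I would follow Farber--Grant most closely.
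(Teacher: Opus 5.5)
Your proposal is correct, and it follows the Farber--Grant arguments the paper defers to; the paper gives no proof of its own beyond citing \cite{FG}. You use the factorization $f\simeq\Delta\circ g$ through a section for (4), relative cup products over an open cover of $Y$ for (3), the test map $f=\id_{X^2}$ for (2), and for (1) the $(|u|+1)$-fold fibrewise join of $\Pi$ as a test map, whose fibre is $(|u|-1)$-connected and which the open sets $\{t_j>0\}$ show has $\secat(p^*\Pi)\le|u|$ without any paracompactness hypothesis.
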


\subsection{Canonical class and spectral sequence}

In this section, we recall some previous results regarding the first cohomology class of Costa and Farber \cite{CF}. Let $X$ be a space with fundamental group $G= \pi_1(X)$. We denote by $\Z[G]$ a free $\Z$-module with basis $G$, and equip it with $G^2$-action
\[
(a, b)\cdot c:= acb^{-1}\quad\text{ for }a, b, c\in G.
\]
Let $I_G$ denote the kernel of the augmentation $\varepsilon\colon \Z[G]\to Z$. Remark that $I_G$ is free as a $\Z$-module with basis $\{g-1 \mid g\in G{\setminus\{1\}} \}$. 
\begin{definition}[\text{\cite[\S 2]{CF}}]
     The \emph{canonical class} $\mathfrak{v}_X\in H^1(X^2; I_G)$ is a cohomology class determined by a map $G^2 \to I_G, (a, b)\mapsto ab^{-1}-1$.
\end{definition}
\noindent
This class is canonical in the sense that, for the canonical map $p\colon X\to BG$ associated with the universal covering, we have $(p^2)^{\ast}(\mathfrak{v}_{BG}) = \mathfrak{v}_{X}$. For this reason, we abbreviate $\mathfrak{v}_{BG}$ to $\mathfrak{v}$. Remark that, if $X$ is simply connected, then $I_G = 0$ and so $\mathfrak{v}_X=0$. One may also check that $\mathfrak{v}_X$ is a zero-divisor; therefore if $\mathfrak{v}_X^k\neq 0\in H^k(X^2; I_G^{\otimes k})$, then $\TC(X)\ge k$. In fact, they obtained a shaper estimation.

\begin{theorem}[\text{\cite[Theorem 7]{CF}}]
    Let $X$ be a CW-complex of dimension $n\ge 2$. Then $\TC(X)<2n$ if and only if $\mathfrak{v}_X^{2n}= 0 \in H^{2n}(X^2; I_G^{\otimes 2n})$.
\end{theorem}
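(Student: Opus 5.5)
The equivalence splits into two directions of different flavor. One uses the $\TC$-weight from Proposition~\ref{wgt props}. The other uses obstruction theory for the fiberwise join of the path fibration $\Pi$.

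\emph{The direction ``$\TC(X)<2n\Rightarrow \mathfrak{v}_X^{2n}=0$''.} Suppose $\mathfrak{v}_X^{2n}\neq 0$. As noted above, $\mathfrak{v}_X$ is a zero-divisor, so Proposition~\ref{wgt props}(4) gives $\wgt(\mathfrak{v}_X)\ge 1$. Applying Proposition~\ref{wgt props}(3) with $u_1=\cdots=u_{2n}=\mathfrak{v}_X$ and $A_i=I_G$ yields $\wgt(\mathfrak{v}_X^{2n})\ge 2n$. Proposition~\ref{wgt props}(2) then gives $\TC(X)\ge 2n$. This is the contrapositive, so this direction is immediate.

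\emph{The direction ``$\mathfrak{v}_X^{2n}=0\Rightarrow\TC(X)<2n$''.} I would use Schwarz's characterization: $\secat(\Pi)\le k$ if and only if the $(k+1)$-fold fiberwise join $\Pi_{k+1}\colon \ast^{k+1}_{X^2}\mathcal{P}X\to X^2$ admits a section. So we must produce a section of $\Pi_{2n}$ over the $2n$-dimensional CW complex $X^2$. Its fiber is the join $(\Omega X)^{\ast 2n}$. A join of $2n$ nonempty spaces is $(2n-2)$-connected. By the Künneth formula for joins and Hurewicz,
\[
\pi_{2n-1}\bigl((\Omega X)^{\ast 2n}\bigr)\cong \tilde H_{2n-1}\bigl((\Omega X)^{\ast 2n}\bigr)\cong \tilde H_0(\Omega X)^{\otimes 2n}\cong I_G^{\otimes 2n}.
\]
Here $\tilde H_0(\Omega X)$ is the kernel of the augmentation on $H_0(\Omega X)=\Z[G]$, i.e.\ $I_G$. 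One checks that the monodromy action of $G^2=\pi_1(X^2)$ on this group is the diagonal action of $(a,b)\cdot c=acb^{-1}$. (For $2n\ge 4$ the fiber is simply connected, so the local system is well defined.) Since $\dim X^2=2n$, obstruction theory says a section exists over the $(2n-1)$-skeleton, and the only obstruction to extending over $X^2$ is the primary obstruction $\theta_{2n}\in H^{2n}(X^2;I_G^{\otimes 2n})$. Higher obstructions vanish for dimensional reasons.

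\emph{Identifying the obstruction.} The key step is $\theta_{2n}=\mathfrak{v}_X^{2n}$. First, I would show that the primary obstruction $\theta_1\in H^1(X^2;\tilde H_0(\Omega X))$ to a section of $\Pi$ itself is $\mathfrak{v}_X$. Over a vertex we pick a path; over an edge representing $(a,b)\in G^2$, the mismatch of path components is measured by $ab^{-1}-1\in I_G$, which is exactly the cocycle defining $\mathfrak{v}_X$. Second, I would invoke Schwarz's multiplicativity: the primary obstruction for the fiberwise join of fibrations is the cup product of the primary obstructions, with coefficients paired through $\tilde H_*(F_1\ast F_2)\cong\tilde H_*(F_1)\otimes\tilde H_*(F_2)$ shifted by one. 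Iterating this gives $\theta_{2n}=\theta_1^{2n}=\mathfrak{v}_X^{2n}$. Hence $\mathfrak{v}_X^{2n}=0$ yields a section of $\Pi_{2n}$, so $\TC(X)\le 2n-1$.

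I expect the main obstacle to be this multiplicativity with twisted coefficients. One has to verify on the cochain level that the join map on fibers induces the tensor product of coefficient systems compatibly with the $G^2$-actions, and that the signs and degree shifts match the cup product into $I_G^{\otimes 2n}$. I would first attempt it by induction, using the cofiber description of the join $E_1\ast_B E_2$ as a pushout over $E_1\times_B E_2$. I would compare the obstruction cocycles cell by cell on the product cell structure of $X^2$.
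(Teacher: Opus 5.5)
The paper gives no proof of this statement; it quotes it from Costa--Farber \cite{CF}. Your outline is correct and is essentially their argument: the lower bound comes from $\mathfrak{v}_X$ being a zero-divisor of weight at least $1$, and the converse uses \v{S}varc's fiberwise-join criterion, with $\mathfrak{v}_X^{2n}$ the primary obstruction. It is also the only obstruction, because $\dim X^2=2n$ and, for $n\ge 2$, the fiber $(\Omega X)^{\ast 2n}$ is simply connected and $(2n-2)$-connected. The step you flag as the main obstacle need not be verified on cochains: the multiplicativity of homological primary obstructions under fiberwise join, with the pairing $\tilde H_{q_1}(F_1)\otimes\tilde H_{q_2}(F_2)\cong\tilde H_{q_1+q_2+1}(F_1\ast F_2)$, is a theorem of \v{S}varc, and the identification of the primary obstruction of $\Pi$ with $\mathfrak{v}_X$ is proved in \cite{CF}.
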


We next consider a spectral sequence introduced by Farber and Mescher \cite{FM}, with slight modifications of notation. Let $M$ be a $G^2$-module that is $\Z$-free. Then by the flatness of $M$, there is a short exact sequence of $G^2$-modules
\[
0\to I_G\otimes M \to \Z[G]\otimes M \xrightarrow{\varepsilon} M  \to 0,
\]
where $\varepsilon$ denotes the augmentation; notice that $I_G\otimes M$ is also $\Z$-free. By iterating this process, we obtain a short exact sequence of $G^2$-modules
\[
0\to I_G^{\otimes s+1} \to \Z[G]\otimes I_G^{\otimes s} \xrightarrow{\varepsilon} I_G^{\otimes s} \to 0.
\]
Then for a $G^2$-module $A$, there is a long exact sequence of $\Z$-modules
\begin{align}
    \label{Ext LES}
    \cdots\xrightarrow{\delta}\Ext^r_{G^2}(I_G^{\otimes s}, A)\xrightarrow{\varepsilon}\Ext^r_{G^2}&(\Z[G]\otimes I_G^{\otimes s}, A)\\
    \notag&\to \Ext^r_{G^2}(I_G^{\otimes s+1}, A)\xrightarrow{\delta}\Ext^{r+1}_{G^2}(I_G^{\otimes s}, A)\xrightarrow{\varepsilon}\cdots,
\end{align}
where $\delta$ denotes the Bockstein homomorphism. We set
\[
D^1_{r, s} := \Ext^r_{G^2}(I_G^{\otimes s}, A), E^1_{r, s} := \Ext^r_{G^2}(\Z[G]\otimes I_G^{\otimes s}, A), \delta^{(1)}= \delta\text{ and }\varepsilon^{(1)} = \varepsilon
\]
and construct the $n$-th derived couple (cf. \cite[VIII. 1.]{HS})
\begin{equation}
    \label{der seq}
    \cdots\xrightarrow{\delta^{(n)}} D^{n}_{r,s}\xrightarrow{\varepsilon^{(n)}}E^{n}_{r-n+1, s+n-1}\to D^{n}_{r-n+1, s+n}\xrightarrow{\delta^{(n)}} D^{n}_{r-n+2, s+n-1}\to \cdots
\end{equation}
\noindent
so that $E^n_{\ast, \ast}$ is a homology spectral sequence with a differential $d^n$ of bidegree $(1-n, n)$. Remark that $D^1_{r,0} = \Ext^r_{G^2}(\Z, A) = H^r(G^2; A)$, and that
\[
D^n_{r,0} = \mathrm{Im}(\delta\circ\cdots\circ\delta\colon D^1_{r-n+1, n-1}\to D^1_{r,0}).
\]
We now consider a function
\[
\mathbf{ev}\colon  I_G^{\otimes n} \otimes \Hom_\Z(I_G^{\otimes n}, A),\quad \mathbf{ev}(x_1\otimes \cdots\otimes x_n\otimes f):= f(x_n\otimes \cdots\otimes x_1)
\]
and recall:
\begin{theorem}[\text{\cite[Corollary 7.4]{FM}}]
    \label{FM Thm}
    For a cohomology class $z\in H^r(G^2; A) = D^1_{r,0}$ and $n\ge 1$, the following are equivalent:
    \begin{enumerate}
        \item $z\in D^{n+1}_{r,0}$.
        \item $\varepsilon^{(i)}(z)=0\in E^{i}_{r-i+1, s+i-1}$ for $i =1, \cdots, n$.
        \item $z = \mathbf{ev}_*(\mathfrak{v}^{n}\smile u)$ for some $u\in H^{r-n}(G^2; \Hom_\Z(I_G^{\otimes n}, A))$.
    \end{enumerate}
\end{theorem}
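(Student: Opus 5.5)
We prove the two equivalences (1)$\Leftrightarrow$(2) and (1)$\Leftrightarrow$(3) separately. The first is formal exact-couple bookkeeping. The second rests on identifying the Bockstein $\delta$ in \eqref{Ext LES} with cup product by the canonical class $\mathfrak{v}$.

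\emph{(1)$\Leftrightarrow$(2).} I would argue by induction on $n$ using exactness of the derived couples \eqref{der seq}. By construction $D^{i+1}_{r,0}=\delta(D^{i}_{r-1,1})\subset D^{i}_{r,0}$, where $\delta$ is restricted to the images. Exactness of the $i$-th derived couple at $D^i_{r,0}$ gives $\mathrm{Ker}(\varepsilon^{(i)})=\mathrm{Im}(\delta^{(i)})=D^{i+1}_{r,0}$. Hence, for $z\in D^i_{r,0}$, we have $z\in D^{i+1}_{r,0}$ if and only if $\varepsilon^{(i)}(z)=0$. Starting from $D^1_{r,0}=H^r(G^2;A)$ and iterating for $i=1,\dots,n$ gives the equivalence. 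Note that $\varepsilon^{(i)}(z)$ only makes sense once $z\in D^i_{r,0}$ is known, which is exactly what the induction supplies.

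\emph{(1)$\Leftrightarrow$(3).} By the formula recalled above,
\[
D^{n+1}_{r,0}=\mathrm{Im}\bigl(\delta^n\colon \Ext^{r-n}_{G^2}(I_G^{\otimes n},A)\to H^r(G^2;A)\bigr).
\]
Since $I_G^{\otimes n}$ is $\Z$-free, the standard isomorphism $\Ext^*_{G^2}(M,A)\cong H^*(G^2;\Hom_\Z(M,A))$ for $\Z$-free $M$ applies, with diagonal action on $\Hom$. It therefore suffices to show the following: under these identifications, the Bockstein
\[
\delta\colon \Ext^{q}_{G^2}(I_G^{\otimes s+1},A)\to \Ext^{q+1}_{G^2}(I_G^{\otimes s},A)
\]
is, up to sign, $u\mapsto \mathbf{ev}'_*(\mathfrak{v}\smile u)$, where $\mathbf{ev}'$ evaluates the new $I_G$ factor against the first slot of $\Hom_\Z(I_G^{\otimes s+1},A)$. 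Iterating $n$ times then composes these partial evaluations into the full $\mathbf{ev}$. The reversed order $x_n\otimes\cdots\otimes x_1$ in the definition of $\mathbf{ev}$ is exactly the bookkeeping of which factor is peeled off at each step.

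The key step is as follows. First identify $\mathfrak{v}\in H^1(G^2;I_G)=\Ext^1_{G^2}(\Z,I_G)$ with the Yoneda class of the extension $0\to I_G\to\Z[G]\to\Z\to0$. To see this, compute the connecting homomorphism on $1\in H^0(G^2;\Z)$: lift $1$ to $1\in\Z[G]$, and its coboundary is $(a,b)\mapsto a\cdot1\cdot b^{-1}-1=ab^{-1}-1$, which is the defining cocycle of $\mathfrak{v}$. Tensoring this extension over $\Z$ with $M=I_G^{\otimes s}$ is exact by $\Z$-freeness, and it yields the sequence defining \eqref{Ext LES}, whose Yoneda class is $\mathfrak{v}\otimes\id_M$. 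The Bockstein is Yoneda product with this class. Translating Yoneda product with an external tensor class into cup product in $H^*(G^2;\Hom_\Z(-,A))$ followed by evaluation can be done at the cochain level, using the bar resolution and the diagonal approximation.

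I expect this cochain-level compatibility to be the main obstacle: matching the Yoneda/Bockstein description with cup product and evaluation, including signs and the ordering of tensor factors. Signs are harmless, since they do not affect images. Once the formula $\delta(u)=\pm\mathbf{ev}'_*(\mathfrak{v}\smile u)$ is established, induction using associativity of the cup product gives
\[
\delta^n(u)=\pm\mathbf{ev}_*(\mathfrak{v}^n\smile u),
\]
and the image of $\delta^n$ is exactly the set of classes described in (3).
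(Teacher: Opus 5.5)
The paper does not prove this statement. It is imported verbatim from Farber--Mescher (Corollary 7.4 of their paper), so there is no in-paper argument to compare against.

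On its own merits your outline is correct. The equivalence (1)$\Leftrightarrow$(2) is exactness of the $i$-th derived couple at $D^i_{r,0}$, together with $D^{i+1}_{r,0}=\delta(D^i_{r-1,1})$. For (1)$\Leftrightarrow$(3), combine $D^{n+1}_{r,0}=\mathrm{Im}(\delta^n)$ with $\Ext^*_{G^2}(P,A)\cong H^*(G^2;\Hom_\Z(P,A))$ for $\Z$-free $P$. This reduces everything to the identity $\delta=\pm\mathbf{ev}'_*(\mathfrak{v}\smile -)$, where $\mathbf{ev}'(x\otimes f)(m)=f(x\otimes m)$. Iterating that identity does produce the reversed-order $\mathbf{ev}$. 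In any case, permuting the factors of $\mathfrak{v}^n$ only changes a sign by graded commutativity, so this bookkeeping cannot affect the image.

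The one step you left as a sketch can be settled without Yoneda products or a diagonal approximation. Put $M=I_G^{\otimes s}$ and $N=\Hom_\Z(I_G\otimes M,A)$.

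\emph{Step 1.} Since $0\to I_G\otimes M\to\Z[G]\otimes M\to M\to 0$ splits over $\Z$, the Bockstein $\delta$ is, up to sign, the connecting map of the coefficient sequence $0\to\Hom_\Z(M,A)\to\Hom_\Z(\Z[G]\otimes M,A)\to N\to 0$.

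\emph{Step 2.} Define the $G^2$-map $\Lambda\colon\Z[G]\otimes N\to\Hom_\Z(\Z[G]\otimes M,A)$ by $\Lambda(x\otimes f)(y\otimes m)=f\bigl((\varepsilon(x)y-\varepsilon(y)x)\otimes m\bigr)$. This gives a morphism of short exact sequences from $0\to I_G\otimes N\to\Z[G]\otimes N\to N\to 0$ to the sequence in Step 1. It is $\id$ on $N$ and $-\mathbf{ev}'$ on $I_G\otimes N$. Naturality of connecting maps then gives $\delta=-\mathbf{ev}'_*\circ\partial_N$.

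\emph{Step 3.} The connecting map satisfies $\partial_N(c)=\mathfrak{v}\smile c$ by a one-line cochain computation. Lift an $N$-valued inhomogeneous cocycle $c$ to $1\otimes c$. Its coboundary is $(g_1,\dots,g_{q+1})\mapsto(g_1\cdot 1-1)\otimes g_1c(g_2,\dots,g_{q+1})$. This is exactly the cup product cocycle, because $\mathfrak{v}(g_1)=g_1\cdot 1-1$.
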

\noindent
Notice that the equivalence of $(1)$ and $(3)$ holds for $n=0$. For $n=1$, cohomology classes satisfying above conditions are characterized as follows:
\begin{proposition}[\text{\cite[Corollary 7.6]{FM}}]
    \label{FM zero}
    For a cohomology class $z\in H^r(G^2; A)$, the condition $z\in D^{2}_{r,0}$ is equivalent to $z$ being a zero-divisor.
\end{proposition}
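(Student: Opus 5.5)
Via Theorem \ref{FM Thm} at $n=1$, $z\in D^2_{r,0}$ if and only if $z=\mathbf{ev}_*(\mathfrak{v}\smile u)$ for some $u\in H^{r-1}(G^2;\Hom_\Z(I_G,A))$. It is also equivalent to $z\in\mathrm{Im}(\delta\colon \Ext^{r-1}_{G^2}(I_G,A)\to \Ext^r_{G^2}(\Z,A))$. By exactness of \eqref{Ext LES} with $s=0$, this image equals $\mathrm{Ker}(\varepsilon\colon \Ext^r_{G^2}(\Z,A)\to\Ext^r_{G^2}(\Z[G],A))$. So we must show that this kernel is precisely the kernel of $\Delta^*\colon H^r(G^2;A)\to H^r(G;U_\Delta A)$.

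The key step is to identify $\Ext^r_{G^2}(\Z[G],A)$ with $H^r(G;U_\Delta A)$ via Shapiro's lemma. As a $G^2$-module, $\Z[G]$ with action $(a,b)\cdot c=acb^{-1}$ is transitive on the basis $G$, and the stabilizer of $1$ is the diagonal subgroup $\Delta(G)\subset G^2$. Hence $\Z[G]\cong \Z G^2\otimes_{\Z\Delta(G)}\Z$. Shapiro's lemma, i.e. the induction--restriction adjunction, then gives
\[
\Ext^r_{G^2}(\Z[G],A)\cong \Ext^r_{\Delta(G)}(\Z,A)=H^r(G;U_\Delta A).
\]

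I then check that under this isomorphism the map $\varepsilon^*$ induced by the augmentation $\Z[G]\to\Z$ becomes the restriction $H^r(G^2;A)\to H^r(G;U_\Delta A)$ along $\Delta\colon G\to G^2$. The augmentation corresponds, under the adjunction, to the identity $\Z\to\Z$ of $\Delta(G)$-modules; that is, it is the counit of induction applied to the trivial module. The composite $\Ext_{G^2}(\Z,A)\to\Ext_{G^2}(\Z[G],A)\cong\Ext_{\Delta(G)}(\Z,A)$ is therefore the restriction map. This can be verified on a projective resolution: take a free $G^2$-resolution $P_\bullet\to\Z$. Then $\Z[G]\otimes P_\bullet$, with the diagonal action, is a resolution of $\Z[G]$. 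Its $\Hom_{G^2}$ into $A$ is identified with $\Hom_{\Delta(G)}(P_\bullet,A)$ by evaluating at $1\otimes -$. Composing with $\varepsilon\otimes\id$ yields restriction.

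Finally, topologically, $\Delta^*$ on $H^*(BG^2;A)$ is exactly the group-cohomology restriction along the diagonal. So $\mathrm{Ker}\,\varepsilon=\mathrm{Ker}\,\Delta^*$, and the zero-divisors are the classes in $D^2_{r,0}$. The main obstacle I expect is the naturality check in the previous step: making sure the Shapiro isomorphism is compatible with the augmentation, and that the flatness and diagonal-action conventions (left/right, $b^{-1}$) line up. I would carry it out carefully at the cochain level with the explicit resolution described above.
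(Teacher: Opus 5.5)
Your argument is correct. The paper gives no proof of its own here; it quotes the statement from \cite[Corollary 7.6]{FM}. Measured against the paper's framework, your route is the natural one:
\begin{enumerate}
\item By exactness of \eqref{Ext LES} at $s=0$, $z\in D^2_{r,0}$ means exactly $\varepsilon(z)=0$. This is condition (2) of Theorem~\ref{FM Thm} with $n=1$.
\item Your Shapiro identification is the $s=0$ case of \eqref{diag iso}.
\item Your cochain check that augmentation followed by Shapiro is restriction is the identity $\Phi_{\Delta_G}\circ\varepsilon_{\Delta_G}^*=\Theta_{\Delta_G}=\Delta^*$ on $\Ext^r_{G^2}(\Z,A)$. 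It uses the fact, noted after \eqref{ind res}, that the counit of induction along an inclusion is the augmentation. The paper relies on the same identification of $\Theta$ with the induced map, via \cite[VI.(2.11)]{HS}, in Proposition~\ref{cent nat}.
\end{enumerate}

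Unlike a characterization through condition (3), your argument never touches the canonical class $\mathfrak{v}$ or the map $\mathbf{ev}$. It is elementary and needs no triviality assumption on $A$. Your opening appeal to condition (3) is not used anywhere and can be dropped.

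When you write it up, state the two routine facts the cochain-level check depends on:
\begin{itemize}
\item $\Z[G]\otimes P_\bullet$ with the diagonal action is $G^2$-free, because $\Z[G]$ is $\Z$-free.
\item $U_\Delta P_\bullet$ is a free $G$-resolution of $\Z$, because $\Delta$ is injective.
\end{itemize}
These are what make evaluation at $1\otimes -$ compute $H^r(G;U_\Delta A)$.
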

\noindent
We also state a supplementary lemma.
\begin{lemma}
    \label{FM supp}
    Let $A$ be an associative $\Z G^2$-algebra with multiplication $\mu\colon A\otimes A\to A$, and suppose that $z_1\in D^{n_1+1}_{{r_1},0}$ and $z_2\in D^{n_2+1}_{{r_2},0}$. Then $z_1\smile z_2\in D^{n_1+n_2+1}_{{r_1+r_2},0}$.
\end{lemma}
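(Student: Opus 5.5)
The plan is to use the characterization (1)$\Leftrightarrow$(3) of Theorem \ref{FM Thm}, which holds for all $n\ge 0$. This converts membership in $D^{n+1}_{r,0}$ into a factorization through a power of the canonical class. We then multiply such factorizations using the associativity of the cup product together with the algebra structure on $A$.

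By Theorem \ref{FM Thm}, write
\[
z_1=\mathbf{ev}_*(\mathfrak{v}^{n_1}\smile u_1), \qquad z_2=\mathbf{ev}_*(\mathfrak{v}^{n_2}\smile u_2),
\]
with
\[
u_i\in H^{r_i-n_i}\bigl(G^2;\Hom_\Z(I_G^{\otimes n_i},A)\bigr).
\]
In the product $z_1\smile z_2$, the coefficients are combined by $\mu$, so $z_1\smile z_2=\mu_*(z_1\smile z_2)$ as a class in $H^{r_1+r_2}(G^2;A)$. The cup product is natural in coefficient homomorphisms and graded commutative up to the twist of coefficients. Hence, after suitable reordering, $z_1\smile z_2$ is the image of
\[
\mathfrak{v}^{n_1}\smile\mathfrak{v}^{n_2}\smile u_1\smile u_2
\]
under the $G^2$-equivariant coefficient map
\[
I_G^{\otimes n_1}\otimes I_G^{\otimes n_2}\otimes \Hom_\Z(I_G^{\otimes n_1},A)\otimes\Hom_\Z(I_G^{\otimes n_2},A)\to A,
\]
\[
x\otimes y\otimes f\otimes g\mapsto \pm\,\mu\bigl(f(\bar x)\otimes g(\bar y)\bigr),
\]
where the bar denotes order reversal and the sign comes from the graded commutativity.

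The next step is to identify this coefficient map with $\mathbf{ev}$ applied to a cup product $\mathfrak{v}^{n_1+n_2}\smile u$. Define the equivariant pairing
\[
\Phi\colon \Hom_\Z(I_G^{\otimes n_1},A)\otimes\Hom_\Z(I_G^{\otimes n_2},A)\to \Hom_\Z(I_G^{\otimes n_1+n_2},A)
\]
by sending $f\otimes g$ to the map $w_1\otimes\cdots\otimes w_{n_1+n_2}\mapsto \mu\bigl(f(\cdots)\otimes g(\cdots)\bigr)$, with the tensor slots distributed to match the reversal convention in $\mathbf{ev}$. Here $f$ takes the last $n_1$ factors and $g$ the first $n_2$, arranged so that
\[
\mathbf{ev}\bigl(\mathfrak{v}^{n_1+n_2}\otimes\Phi(f\otimes g)\bigr)
\]
matches the expression above. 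Equivariance of $\Phi$ follows because $\mu$ is a $G^2$-map and $G^2$ acts on $\Hom_\Z$ by conjugation. Set $u:=\pm\Phi_*(u_1\smile u_2)$, a class of degree $r_1+r_2-n_1-n_2$.

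To finish, reorder using associativity and graded commutativity of the cup product. The degree-$1$ factors $\mathfrak{v}$ must pass the class $u_1$, which introduces a sign $(-1)^{n_2(r_1-n_1)}$ together with a twist of coefficients. That twist is absorbed into the choice of $\Phi$. This gives
\[
z_1\smile z_2=\mathbf{ev}_*\bigl(\mathfrak{v}^{n_1+n_2}\smile u\bigr),
\]
and so (3)$\Rightarrow$(1) of Theorem \ref{FM Thm} yields $z_1\smile z_2\in D^{n_1+n_2+1}_{r_1+r_2,0}$.

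The main obstacle is this bookkeeping step: checking that the reordering of tensor factors, with the reversal in $\mathbf{ev}$, fits together so that one equivariant $\Phi$ makes the coefficient diagram commute. The difficulty is mostly notational, and signs are harmless since $u$ may be replaced by $-u$. If commuting the classes directly proves awkward, the alternative is to work at the cochain level with the bar resolution, where $\mathfrak{v}$ is the explicit $1$-cocycle $(a,b)\mapsto ab^{-1}-1$ and the cup product is the Alexander–Whitney formula.
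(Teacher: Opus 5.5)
Your proposal is correct and follows the paper's proof: your equivariant pairing $\Phi$ is the paper's map $\lambda$, and your reordering step is the paper's commutative coefficient diagram with the switch of the second and third tensor factors. You also spell out two details the paper leaves implicit, namely the sign $(-1)^{n_2(r_1-n_1)}$ and the slot assignment ($f$ on the last $n_1$ factors, $g$ on the first $n_2$) needed for that diagram to commute given the reversal in $\mathbf{ev}$.
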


\begin{proof}
    Define a map $\lambda\colon \Hom_\Z(I_G^{\otimes n_1}, A)\otimes \Hom_\Z(I_G^{\otimes n_2}, A) \to \Hom_\Z(I_G^{\otimes n_1+ n_2}, A)$ by
    \begin{align*}
        \lambda(f_1\otimes f_2)&(x_1\otimes \cdots\otimes x_{n_1}\otimes\cdots\otimes x_{n_1+n_2})\\
        &= \mu(f_1(x_{n_1}\otimes\cdots\otimes x_1)\otimes f_2(x_{n_1+n_2}\otimes\cdots\otimes x_{n_1+1})).
    \end{align*}
    \noindent
    It is straightforward to check that $\lambda$ is a $G^2$-homomorphism, and that there is a commutative diagram
    \[
    \xymatrix{
        I_G^{\otimes n_1}\otimes \Hom_\Z(I_G^{\otimes n_1}, A)\otimes I_G^{\otimes n_2}\otimes \Hom_\Z(I_G^{\otimes n_2}, A)\ar^-{\mu\circ(\mathbf{ev}\otimes\mathbf{ev})}[r]\ar[d] 
        & A\\
        I_G^{\otimes n_1}\otimes I_G^{\otimes n_2}\otimes \Hom_\Z(I_G^{\otimes n_1}, A)\otimes \Hom_\Z(I_G^{\otimes n_2}, A)\ar_-{\id\otimes \lambda}[r] & I_G^{\otimes n_1+ n_2}\otimes \Hom_\Z(I_G^{\otimes n_1+ n_2}, A),
        \ar_{\mathbf{ev}}[u]
    }
    \]
    where the left column is the switch map of the second and third component. Thus by the definition of cup product and Theorem \ref{FM Thm}, the statement is proved.
\end{proof}

\section{Algebraic preliminaries}\label{alg pre}

In this section, we collect the necessary preliminaries from homological algebra for the subsequent discussion.

\subsection{The induction and restriction functor}\label{alg pre 1}

We consider two functors associated with a group homomorphism $\alpha\colon H\to G$; the \emph{induction} $F_\alpha\colon H\textbf{-Mod}\to G\textbf{-Mod}$ and the \emph{restriction} $U_\alpha\colon G\textbf{-Mod}\to H\textbf{-Mod}$. To define the induction, we first regard $\Z G$ as a $(G, H)$-bimodule via the left translation and the right $H$-action
\[
a\cdot b = a \alpha(b)\quad\text{ for }a\in G \text{ and }b \in H.
\]
Then for a left $H$-module $M$, we obtain a left $G$-module $F_\alpha M := \Z G\otimes_H M$. We also equip a left $G$-module $N$ with an $H$-action
\[
a\cdot n := \alpha(a)\cdot n\quad\text{ for }a\in H \text{ and } n\in N,
\]
and denote it by $U_\alpha N$. It is easily verified that these correspondences are functorial, and that $F_\alpha$ is a left adjoint of $U_\alpha$. We denote the natural isomorphism by
\[
\varphi_\alpha\colon \Hom_G(F_\alpha M, N)\to \Hom_H(M, U_\alpha N),
\]
the unit $\varphi_\alpha(\id)\colon M\to U_\alpha F_\alpha M$ by $\eta_\alpha$ and the counit $\varphi_\alpha^{-1}(\id)\colon F_\alpha U_\alpha N\to N$ by $\varepsilon_\alpha$.

We now consider a commutative diagram of groups
\begin{equation}
    \label{group square}
\xymatrix{
H'\ar^{\beta}[r]\ar_{\kappa}[d] & G'\ar^{\iota}[d]\\
H\ar_{\alpha}[r] & G.
}
\end{equation}
\noindent
Remark that $F_\iota F_\beta = F_\alpha F_\kappa$ and $U_\beta U_\iota = U_\kappa U_\alpha$. Then for an $H'$-module $L$, $G'$-module $M$ and $f\in \Hom_{H'}(L, U_\beta M)$, we obtain a $H'$-homomorphism
\[
\xymatrix{
    L \ar^{f}[rr] & & U_\beta M \ar^-{U_\beta \eta_\iota}[rr] & & U_\beta U_\iota F_\iota M = U_\kappa U_\alpha F_\iota M.
}
\]
For simplicity, we denote by $\overline{f}$ a homomorphism $\varphi_\kappa^{-1}(U_\beta \eta_\iota \circ f)\colon F_\kappa L \to U_\alpha F_\iota M$.
Regarding this construction, we list some lemmas for future reference.
\begin{lemma}
    \label{f bar}
    Let $f$ as above, and let $\id\colon U_\beta M\to U_\beta M$ denote the identity. Then:
    \begin{enumerate}
        \item $\varepsilon_\alpha \circ F_\alpha(\overline{\id}) = F_\iota(\varepsilon_\beta)\colon F_\iota F_\beta U_\beta M\to F_\iota M$.
        \item $\overline{f} = \overline{\id}\circ F_\kappa(f) \colon F_\kappa L \to U_\alpha F_\iota M$.
    \end{enumerate}
\end{lemma}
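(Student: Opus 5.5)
Both identities are formal consequences of the adjunction $F\dashv U$ together with the identities $F_\iota F_\beta = F_\alpha F_\kappa$ and $U_\beta U_\iota = U_\kappa U_\alpha$. I will use the standard description of $\varphi_\kappa^{-1}$ through the counit:
\[
\varphi_\kappa^{-1}(g) = \varepsilon_\kappa\circ F_\kappa(g) \quad\text{for } g\colon L\to U_\kappa N,
\]
and dually $\varphi_\kappa(h) = U_\kappa(h)\circ\eta_\kappa$. I also use naturality of $\varphi$ in both variables.

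For (2), I argue by naturality in the first variable. For any $g\colon L'\to U_\kappa N$ and $f\colon L\to L'$, we have
\[
\varphi_\kappa^{-1}(g\circ f)=\varphi_\kappa^{-1}(g)\circ F_\kappa(f).
\]
Take $L'=U_\beta M$, $N = U_\alpha F_\iota M$ and $g = U_\beta\eta_\iota$. Then $g\circ f = U_\beta\eta_\iota\circ f$, so the left side is $\overline{f}$ and the right side is $\overline{\id}\circ F_\kappa(f)$, as required. This step is immediate.

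For (1), I will check the equality on generators and use the concrete models. Since $F_\iota F_\beta U_\beta M = \Z G\otimes_{G'}\Z G'\otimes_{H'} M$, this module is identified with $\Z G\otimes_{H'} M$, the $H'$-action being through $\iota\beta=\alpha\kappa$; it is generated by the elements $g\otimes m$. I then trace the maps in turn:
\begin{itemize}
\item $\eta_\iota(m)=1\otimes m\in \Z G\otimes_{G'}M$.
\item $\overline{\id}\colon \Z H\otimes_{H'} U_\beta M\to U_\alpha F_\iota M$ sends $h\otimes m\mapsto \alpha(h)\otimes m$, using $\varepsilon_\kappa(h\otimes n)=h\cdot n$.
\item Applying $F_\alpha$ and then $\varepsilon_\alpha$ sends $g\otimes h\otimes m\mapsto g\alpha(h)\otimes m$; on generators $g\otimes 1\otimes m$ this gives $g\otimes m\in \Z G\otimes_{G'} M$.
\item On the other side, $F_\iota(\varepsilon_\beta)$ sends $g\otimes g'\otimes m\mapsto g\otimes g'm$; on $g\otimes1\otimes m$ this is again $g\otimes m$.
\end{itemize}
The two maps agree on generators, so they are equal.

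The only real obstacle is bookkeeping. One must state the identification $F_\iota F_\beta=F_\alpha F_\kappa$ precisely, since it is a canonical isomorphism $\Z G\otimes_{G'}\Z G'\otimes_{H'}-\cong \Z G\otimes_H\Z H\otimes_{H'}-$ and not a literal equality. One must also check that the formula for $\overline{\id}$ is well defined over $H'$: this holds because $\alpha\kappa=\iota\beta$.

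If a fully formal proof is preferred, (1) can instead be derived from the triangle identities together with the naturality of $\varepsilon$. In that approach the mate of $U_\beta\eta_\iota$ under the composite adjunction $F_\alpha F_\kappa\dashv U_\kappa U_\alpha$ is computed in two ways, and the two computations are compared. The elementwise check above is shorter, so I would present that one.
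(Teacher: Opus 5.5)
Your proposal is correct. Part (2) is the paper's argument: naturality of $\varphi_\kappa$ in the first variable, applied to $g=U_\beta\eta_\iota$.

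For part (1) your main route differs from the paper's. The paper argues purely formally:
\[
\varepsilon_\alpha\circ F_\alpha(\overline{\id})=\varphi_\alpha^{-1}\varphi_\kappa^{-1}(U_\beta\eta_\iota)=\varphi_\iota^{-1}\varphi_\beta^{-1}(U_\beta\eta_\iota)=\varphi_\iota^{-1}(\eta_\iota\circ\varepsilon_\beta)=F_\iota(\varepsilon_\beta).
\]
It uses the two factorizations of the composite adjunction, naturality of $\varepsilon_\beta$, and $\varphi_\iota^{-1}(\eta_\iota)=\id$. This is precisely the mate computation you mention as a fallback.

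You instead evaluate both sides on elementary tensors in the model $\Z G\otimes_{H'}M$. Your formulas are right: $\overline{\id}(h\otimes m)=\alpha(h)\otimes m$, and both composites become the quotient map $g\otimes m\mapsto g\otimes_{G'}m$. Testing on the elements $g\otimes 1\otimes m$ suffices, because these generate both iterated tensor products compatibly with the identification.

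The formal proof is shorter and works for any commuting square of adjunctions. However, its middle equality $\varphi_\alpha^{-1}\varphi_\kappa^{-1}=\varphi_\iota^{-1}\varphi_\beta^{-1}$ tacitly assumes that the identification $F_\iota F_\beta=F_\alpha F_\kappa$ is compatible with the two composite adjunction bijections. The paper writes that identification as a literal equality. Your elementwise check makes the canonical isomorphism explicit and verifies the identity through it directly. As a by-product, it gives the explicit formula for $\overline{\id}$ that the paper derives only later, where inclusions are assumed; your computation shows that hypothesis is not needed for that formula.
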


\begin{proof}
    \begin{enumerate}
    \item By the naturality of $\varphi^{-1}_\alpha$, we obtain
    \begin{align*}
        \varepsilon_\alpha \circ F_\alpha(\overline{\id}) &= \varphi^{-1}_\alpha(\overline{\id})\\
        &= \varphi^{-1}_\alpha (\varphi_\kappa^{-1}(U_\beta \eta_\iota))\\
        &= \varphi^{-1}_\iota (\varphi_\beta^{-1}(U_\beta \eta_\iota))\\
        &= \varphi^{-1}_\iota (\eta_\iota\circ\varepsilon_\beta) = F_\iota(\varepsilon_\beta).
    \end{align*}
    Thus since $\varphi^{-1}_\alpha$ is an isomorphism, the statement is proved.
    \item We also have
    \[\varphi_\kappa(\overline{\id}\circ F_\kappa(f)) = \varphi_\kappa (\overline{\id})\circ f.
    \]
    The statement is proved in the same way as above.
    \end{enumerate}
\end{proof}

\begin{lemma}
    \label{f square}
    Let $L'$ be an $H'$-module, $M'$ a $G$-module and $h\in \Hom_{H}(L', U_\alpha M')$. Then $U_\kappa h\in \Hom_{H}(U_\kappa L', U_\beta U_\iota M')$ and there is a commutative diagram
    \[
    \xymatrix{
        F_\kappa U_\kappa L' \ar^{\varepsilon_\kappa}[rr]\ar_{\overline{U_\kappa h}}[d]& & L'\ar^{h}[d]\\
        U_\alpha F_\iota U_\iota M' \ar_{U_\alpha\varepsilon_\iota}[rr]& & U_\alpha M'.
    }
    \]
\end{lemma}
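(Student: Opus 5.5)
The plan is to check commutativity through the adjunction isomorphism $\varphi_\kappa$, exactly as in the proof of Lemma \ref{f bar}. (Here $L'$ has to be read as an $H$-module, since $h\colon L'\to U_\alpha M'$ is an $H$-homomorphism.)

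First, the typing claim. Restricting $h$ along $\kappa$ gives an $H'$-map
\[
U_\kappa h\colon U_\kappa L'\to U_\kappa U_\alpha M'.
\]
By commutativity of \eqref{group square} we have $U_\kappa U_\alpha = U_\beta U_\iota$. So $U_\kappa h\in \Hom_{H'}(U_\kappa L', U_\beta(U_\iota M'))$. We may therefore apply the bar construction with $L=U_\kappa L'$, $M=U_\iota M'$ and $f=U_\kappa h$. It yields
\[
\overline{U_\kappa h}=\varphi_\kappa^{-1}\bigl(U_\beta\eta_\iota\circ U_\kappa h\bigr)\colon F_\kappa U_\kappa L'\to U_\alpha F_\iota U_\iota M'.
\]

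Both composites in the square are $H$-maps $F_\kappa U_\kappa L'\to U_\alpha M'$. Since $\varphi_\kappa$ is a bijection, it suffices to show they have the same image under $\varphi_\kappa$. We use the standard formula $\varphi_\kappa(g)=U_\kappa g\circ\eta_\kappa$ together with its naturality $\varphi_\kappa(k\circ g)=U_\kappa k\circ\varphi_\kappa(g)$.

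For the top-right composite, naturality and the triangle identity $U_\kappa\varepsilon_\kappa\circ\eta_{\kappa}=\id_{U_\kappa L'}$ give
\[
\varphi_\kappa(h\circ\varepsilon_\kappa)=U_\kappa h\circ U_\kappa\varepsilon_\kappa\circ\eta_\kappa=U_\kappa h .
\]

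For the left-bottom composite, naturality in the target gives
\[
\varphi_\kappa(U_\alpha\varepsilon_\iota\circ\overline{U_\kappa h})=U_\kappa U_\alpha\varepsilon_\iota\circ U_\beta\eta_\iota\circ U_\kappa h .
\]
Rewriting $U_\kappa U_\alpha=U_\beta U_\iota$, the first two factors become $U_\beta\bigl(U_\iota\varepsilon_\iota\circ\eta_{\iota,U_\iota M'}\bigr)$. This is $U_\beta(\id)=\id$ by the other triangle identity for $F_\iota\dashv U_\iota$. Hence this composite also maps to $U_\kappa h$, and the square commutes.

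The only point needing care is the identification $U_\kappa U_\alpha\varepsilon_\iota=U_\beta U_\iota\varepsilon_\iota$ as literally the same map. This holds because restriction does not change underlying abelian groups or maps, so it is immediate. Everything else is the pair of triangle identities.
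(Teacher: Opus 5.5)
Your proof is correct and is essentially the paper's argument. You apply $\varphi_\kappa$ to both composites, and use naturality together with $U_\kappa U_\alpha = U_\beta U_\iota$ and the counit--unit identity for $F_\iota \dashv U_\iota$ to reduce each side to $U_\kappa h$. You also correctly read $L'$ as an $H$-module and $U_\kappa h$ as an $H'$-homomorphism, which fixes the typos in the statement.
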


\begin{proof}
    We have
    \begin{align*}
        \varphi_\kappa(U_\alpha \varepsilon_\iota\circ \overline{U_\kappa h})&= U_\kappa U_\alpha \varepsilon_\iota\circ (U_\beta \eta_\iota \circ U_\kappa h)\\
        &= U_\beta U_\iota \varepsilon_\iota \circ U_\beta \eta_\iota \circ U_\kappa h \\
        &= U_\beta (U_\iota \varepsilon_\iota \circ \eta_\iota) \circ U_\kappa h = U_\kappa h = \varphi_\kappa (h\circ \varepsilon_\kappa),
    \end{align*}
    where the fourth identity follows from the counit-unit equations. Thus since $\varphi_\kappa$ is an isomorphism, the proof is finished.
\end{proof}

\subsection{Ext group homomorphisms} Let $\alpha\colon H\to G$ be a group homomorphism. Then, as in \cite[IV. 12.]{HS}, the natural isomorphism 
\[
\varphi_\alpha\colon \Hom_G(F_\alpha M, N)\to \Hom_H(M, U_\alpha N)
\]
induces a natural homomorphism of Ext groups
\begin{equation}
    \label{Ext Phi}
    \Phi_\alpha\colon \Ext^r_G(F_\alpha M, N)\to \Ext^r_H(M, U_\alpha N) \quad\text{for}\quad r\ge 0.
\end{equation}
Remark that, if $\alpha$ is an inclusion, then $\Phi_\alpha$ is a natural isomorphism (cf. \cite[VI. Corollary 1.4.]{HS}). We also set a natural homomorphism
\[
\Theta_\alpha\colon \Ext^r_G(M, N)\xrightarrow{\varepsilon_\alpha^*} \Ext^r_G(F_\alpha U_\alpha M, N) \xrightarrow{\Phi_\alpha} \Hom_H(U_\alpha M, U_\alpha N).
\]
We now consider the commutative diagram \eqref{group square}.
\begin{proposition}
    \label{Ext square}
    Let $A$ be a $G$-module with trivial action, and let $f\in \Hom_{H'}(L, U_\beta M)$. Then there is a commutative diagram of abelian groups
\[
\xymatrix{
        \Ext^r_G(F_\iota M, A)  \ar^{\overline{f}^*\circ\Theta_\alpha}[rr]\ar_{\Phi_\iota}[d]& & \Ext^r_H(F_\kappa L, A)\ar^{\Phi_\kappa}[d]\\
        \Ext^r_{G'}(M, A) \ar_{f^*\circ\Theta_\beta}[rr] & & \Ext^r_{H'}(L, A),
    }
\]
where $A$ is regarded as a trivial $G', H$ and $H'$-module.
\end{proposition}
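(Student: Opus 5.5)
The plan is to reduce the square to two facts: naturality of $\Phi$ in the first variable, and compatibility of $\Phi$ with composition of adjunctions. Lemma \ref{f bar} then does the actual work. I read $\Theta_\alpha$ as the composite $\Phi_\alpha\circ\varepsilon_\alpha^*\colon \Ext^r_G(M,N)\to \Ext^r_H(U_\alpha M, U_\alpha N)$. Since $A$ is trivial, $U_\alpha A$, $U_\kappa A$, $U_\iota A$ and $U_\beta A$ are all just $A$ with trivial action.

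\emph{Step 1: two formal properties of $\Phi$.}
\begin{enumerate}
\item $\Phi_\alpha$ is natural in the first variable: for $g\colon M_1\to M_2$ an $H$-map, $\Phi_\alpha\circ(F_\alpha g)^* = g^*\circ\Phi_\alpha$.
\item $\Phi_\kappa\circ\Phi_\alpha = \Phi_\beta\circ\Phi_\iota$ as maps $\Ext^r_G(F_\alpha F_\kappa L, A)=\Ext^r_G(F_\iota F_\beta L,A)\to \Ext^r_{H'}(L,A)$.
\end{enumerate}
Both should follow from the description in \cite[IV.12]{HS}. Take a projective resolution $P\to L$. Then $\Phi$ is the composite of the comparison map into $H^*(\Hom_G(F P, A))$ with the map induced by the Hom-level isomorphism $\varphi$. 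For (2), both composites factor through $H^*(\Hom_G(F_\alpha F_\kappa P,A))$. On Hom level $\varphi_\kappa\varphi_\alpha$ and $\varphi_\beta\varphi_\iota$ both coincide with the adjunction isomorphism of the composite $\alpha\kappa=\iota\beta$. It remains to check that the comparison maps agree; they do, by uniqueness of chain maps up to homotopy. I expect this bookkeeping (that $\Phi$ is functorial for composite adjunctions, given that $F$ need not be exact) to be the main obstacle, though it is routine.

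\emph{Step 2: the computation.} For $x\in\Ext^r_G(F_\iota M,A)$, I would compute the upper path:
\[
\Phi_\kappa\overline{f}^*\Phi_\alpha\varepsilon_\alpha^*x
=\Phi_\kappa\Phi_\alpha (F_\alpha\overline{f})^*\varepsilon_\alpha^* x
=\Phi_\kappa\Phi_\alpha(\varepsilon_\alpha\circ F_\alpha\overline{f})^*x,
\]
using (1) for $\Phi_\alpha$. By Lemma \ref{f bar}(2), $F_\alpha\overline f = F_\alpha(\overline{\id})\circ F_\alpha F_\kappa(f)$. By Lemma \ref{f bar}(1), $\varepsilon_\alpha\circ F_\alpha(\overline{\id})=F_\iota(\varepsilon_\beta)$. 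Hence
\[
\varepsilon_\alpha\circ F_\alpha\overline f = F_\iota\bigl(\varepsilon_\beta\circ F_\beta f\bigr).
\]

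\emph{Step 3: conclude.} Apply (2), then (1) for $\Phi_\iota$, then (1) for $\Phi_\beta$:
\[
\Phi_\kappa\Phi_\alpha F_\iota(\varepsilon_\beta F_\beta f)^*x=\Phi_\beta\Phi_\iota F_\iota(\varepsilon_\beta F_\beta f)^*x=\Phi_\beta(F_\beta f)^*\varepsilon_\beta^*\Phi_\iota x = f^*\Phi_\beta\varepsilon_\beta^*\Phi_\iota x .
\]
The right-hand side is $f^*\Theta_\beta\Phi_\iota(x)$, which is the lower path of the square, so the diagram commutes.
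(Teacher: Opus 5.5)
Your proposal is correct and is essentially the paper's proof. The paper uses the same ingredients: naturality of $\Phi$ in the first variable, the identity $\Phi_\kappa\Phi_\alpha=\Phi_\beta\Phi_\iota$ (which it calls ``obviously functorial''), and both parts of Lemma \ref{f bar}; it arranges them as two ladder diagrams, applying the composite-adjunction square at $U_\beta M$ rather than at $L$, instead of one chain of equalities, and your comparison-theorem argument supplies a justification for $\Phi_\kappa\Phi_\alpha=\Phi_\beta\Phi_\iota$ that the paper omits.
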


\begin{proof}
    Let $\id\colon U_\beta M\to U_\beta M$ denote the identity. We first consider the diagram
    \[
    \xymatrix{
    \Ext^r_G(F_\iota M, A) \ar^-{\varepsilon_\alpha^*}[r]\ar@{=}[d] &\Ext^r_G(F_\alpha U_\alpha F_\iota M, A) \ar^{\Phi_\alpha}[r]\ar^{{F_\alpha(\overline{\id})}^*}[d] & \Ext^r_H(U_\alpha F_\iota M, A) \ar^{\overline{f}^*}[r]\ar^{\overline{\id}^*}[d] & \Ext^r_H(F_\kappa L, A)\ar@{=}[d]\\
    \Ext^r_G(F_\iota M, A) \ar_-{F_\iota(\varepsilon_\beta)^*}[r] &\Ext^r_G(F_\alpha F_\kappa U_\beta M, A) \ar_{\Phi_\alpha}[r] & \Ext^r_H(F_\kappa U_\beta M, A) \ar_{F_\kappa(f)^*}[r] & \Ext^r_H(F_\kappa L, A).
    }
    \]
    Then by Lemma \ref{f bar} and the naturality of $\Phi_\alpha$, all squares are commutative. 
    
    We next show that the following diagram is commutative;
    \[
    \xymatrix{
    \Ext^r_G(F_\iota M, A) \ar^-{F_\iota(\varepsilon_\beta)^*}[r] \ar[d]_{\Phi_\iota}&\Ext^r_G(F_\iota F_\beta U_\beta M, A) \ar^{\Phi_\alpha}[r] \ar_{\Phi_\iota}[d]& \Ext^r_H(F_\kappa U_\beta M, A) \ar^{F_\kappa(f)^*}[r] \ar_{\Phi_\kappa}[d]& \Ext^r_H(F_\kappa L, A)\ar^{\Phi_\kappa}[d]\\
    \Ext^r_{G'}(M, A) \ar_-{{\varepsilon_\beta}^*}[r] & \Ext^r_{G'}(F_\beta U_\beta M, A) \ar_{\Phi_\beta}[r] & \Ext^r_{H'}(U_\beta M, A) \ar_{f^*}[r] & \Ext^r_{H'}(L, A).
    }
    \]
   The commutativity of the left and right squares follows from the naturality of $\Phi_\iota$ and $\Phi_\kappa$. The construction of the Ext homomorphisms \eqref{Ext Phi} is obviously functorial, and so the middle square is commutative. Thus by combining these diagrams, the statement is proved.
\end{proof}

\subsection{Representations}

For a group homomorphism $\alpha\colon H\to G$, we recall that the natural isomorphism $\varphi_\alpha\colon \Hom_G(F_\alpha M, N)\to \Hom_H(M, U_\alpha N)$ is given by
\begin{equation}
    \label{phi rep}
    \varphi_\alpha(g)(m)= g(1\underset{H}{\otimes} m),\quad \varphi_\alpha^{-1}(h)(a\underset{H}{\otimes} m)= a\cdot h(m),
\end{equation}
for $g\in \Hom_G(F_\alpha M, N)$, $h\in \Hom_H(M, U_\alpha N)$,  $a\in G$ and $m\in M$. In particular, if $\iota\colon G' \hookrightarrow G$ is an inclusion, then there is an isomorphism (cf. \cite[III. 5.]{Br})
\begin{equation}
    \label{ind res}
F_\iota U_\iota N = \Z G\otimes_{G'} U_\iota N \cong \Z[G/G'] \otimes N,\quad a\underset{G'}{\otimes} n\mapsto [a]\otimes (a\cdot n)
\end{equation}
\noindent
for $a\in \Z G$ and $n\in N$, where $[a]$ denotes the representative of $a$ in $\Z[G/G']$ and $G$ acts diagonally on the RHS. Thus by \eqref{phi rep}, the counit $\varepsilon_\iota= \varphi_\alpha^{-1}(\id)\colon F_\iota U_\iota N\to N$ coincides with the augmentation
\[
\varepsilon\colon\Z[G/G'] \otimes N \to N,\quad [a]\otimes n\mapsto n.
\]

We now resume considering the commutative diagram of groups
\begin{equation*}
\xymatrix{
H'\ar^{\beta}[r]\ar_{\kappa}[d] & G'\ar^{\iota}[d]\\
H\ar_{\alpha}[r] & G,
}
\end{equation*}
\noindent
with an assumption that $\kappa$ and $\iota$ are inclusions.

\begin{proposition}
    \label{bar rep}
    For $f\in \Hom_{H'}(L, U_\beta M)$, the homomorphism $\overline{f}\colon F_\kappa L\to U_\alpha F_\iota M$ is represented by
    \[
    a\underset{H'}{\otimes} m \mapsto \alpha(a)\underset{G'}{\otimes}f(m)\quad\text{ for }a\in \Z H\text{ and }m\in L.
    \] 
\end{proposition}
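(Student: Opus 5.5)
By definition, $\overline{f} = \varphi_\kappa^{-1}(U_\beta \eta_\iota \circ f)$. The plan is to unwind this using the explicit formulas \eqref{phi rep}, applied twice: once to identify the unit $\eta_\iota$, and once to invert $\varphi_\kappa$.

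First I compute the unit $\eta_\iota = \varphi_\iota(\id)\colon M\to U_\iota F_\iota M$. By \eqref{phi rep} with $g=\id_{F_\iota M}$, we get $\eta_\iota(m) = 1\otimes_{G'} m$. Applying $U_\beta$ changes only the group acting, not the underlying map. Hence $U_\beta\eta_\iota\circ f\colon L\to U_\beta U_\iota F_\iota M = U_\kappa U_\alpha F_\iota M$ is given by
\[
m\mapsto 1\underset{G'}{\otimes} f(m).
\]

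Next I apply the second formula of \eqref{phi rep} for $\varphi_\kappa^{-1}$, with $h = U_\beta\eta_\iota\circ f$ and target $N = U_\alpha F_\iota M$ regarded as an $H$-module. For $a\in H$ and $m\in L$ this gives
\[
\overline{f}(a\otimes_{H'} m) = a\cdot h(m) = a\cdot\bigl(1\otimes_{G'} f(m)\bigr).
\]
The $H$-action on $U_\alpha F_\iota M$ is through $\alpha$ followed by left multiplication on the $\Z G$ factor. So $a\cdot(1\otimes_{G'} f(m)) = \alpha(a)\otimes_{G'} f(m)$. Extending $\Z$-linearly in $a\in\Z H$ gives the claimed formula.

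The only point needing care is bookkeeping: the second formula of \eqref{phi rep} must be applied with the correct module structure on $N$, namely the $H$-action on $U_\alpha F_\iota M$ through $\alpha$. One should also check that the expression is well defined on $F_\kappa L=\Z H\otimes_{H'}L$, i.e. balanced over $H'$. This follows from commutativity of the square \eqref{group square}: for $b\in H'$,
\[
\alpha(a\kappa(b))\otimes_{G'} f(m) = \alpha(a)\,\iota\beta(b)\otimes_{G'} f(m) = \alpha(a)\otimes_{G'}\beta(b) f(m) = \alpha(a)\otimes_{G'} f(bm),
\]
using that $f$ is an $H'$-map into $U_\beta M$. In any case, well-definedness is automatic because $\varphi_\kappa^{-1}$ produces a genuine homomorphism. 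The inclusion hypothesis on $\kappa,\iota$ is not essential here; it only lets us view $\Z H\otimes_{H'}L$ concretely.
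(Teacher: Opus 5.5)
Your proof is correct and follows the paper's argument: read off $\eta_\iota(m)=1\otimes_{G'}m$ from \eqref{phi rep}, compose with $f$, then invert $\varphi_\kappa$ by the second formula of \eqref{phi rep}, using that $H$ acts on $U_\alpha F_\iota M$ through $\alpha$. Your extra check that the formula is balanced over $H'$ and your remark that the inclusion hypothesis on $\kappa,\iota$ is not used are both accurate. Neither is needed, since $\varphi_\kappa^{-1}$ already produces a well-defined homomorphism.
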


\begin{proof}
    By \eqref{phi rep}, the unit $\eta_\iota=\varphi_\iota(\id)\colon M\to U_\iota F_\iota M$ is written as 
    \[
    m\mapsto \id(1\underset{G'}{\otimes}m) = 1\underset{G'}{\otimes}m \quad\text{ for }m\in M,
    \]
    and so we have
    \[
    L\xrightarrow{f}U_\beta L\xrightarrow{U_\beta \eta_\iota}U_\beta U_\iota F_\iota M\colon m\mapsto 1\underset{G'}{\otimes}f(m) \text{ for }m\in L.
    \]
    Remark that $U_\alpha F_\iota M$ is equipped with an $H$-action
    \[
    a\cdot(b\underset{G'}{\otimes}m) = (\alpha(a)\cdot b)\underset{G'}{\otimes}m \quad\text{ for }a\in H,\, b \in \Z G\text{ and }m\in M.
    \]
    Thus since $\overline{f}=\varphi_\kappa^{-1}(U_\beta \eta_\iota \circ f)$, we obtain the description by \eqref{phi rep}, as stated.
\end{proof}

\begin{corollary}
    \label{bar rep 2}
    For $h\in \Hom_{H}(L', U_\alpha M')$, the homomorphism 
    \[
    \overline{U_\kappa h}\colon F_\kappa U_\kappa L' = \Z[H/H']\otimes L' \to U_\alpha(\Z[G/G']) \otimes U_\alpha M' = U_\alpha F_\iota U_\iota M'
    \]
    in Lemma \ref{f square} is represented by
    \[
       [a]\otimes m\mapsto [\alpha(a)]\otimes h(m) \quad\text{ for }a\in \Z H\text{ and }m\in L'.
    \]
\end{corollary}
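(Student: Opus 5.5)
The plan is to specialize Lemma~\ref{f square} and Proposition~\ref{bar rep} to $f = U_\kappa h$, and then pass through the identifications \eqref{ind res} on both sides. By Lemma~\ref{f square}, $U_\kappa h$ lies in $\Hom_{H'}(U_\kappa L', U_\beta U_\iota M')$. This is exactly the setting of Proposition~\ref{bar rep} with $L = U_\kappa L'$ and $M = U_\iota M'$. That proposition describes $\overline{U_\kappa h}\colon \Z H\otimes_{H'} U_\kappa L' \to U_\alpha(\Z G\otimes_{G'} U_\iota M')$ by
\[
a\underset{H'}{\otimes} m\mapsto \alpha(a)\underset{G'}{\otimes} h(m).
\]
Here we use that $U_\kappa h$ and $h$ agree as maps of underlying abelian groups.

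Next I transport this formula along the isomorphisms \eqref{ind res}. On the source, $\kappa$ is an inclusion, so $a\otimes_{H'} m\mapsto [a]\otimes (a\cdot m)$ identifies $F_\kappa U_\kappa L'$ with $\Z[H/H']\otimes L'$. On the target, $\iota$ is an inclusion, so $b\otimes_{G'} n\mapsto [b]\otimes (b\cdot n)$ identifies $F_\iota U_\iota M'$ with $\Z[G/G']\otimes M'$. Applying $U_\alpha$ gives $U_\alpha(\Z[G/G'])\otimes U_\alpha M'$ with the diagonal $H$-action, since restriction commutes with $\otimes_\Z$ and the diagonal action.

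It suffices to check the formula on generators $a\in H$ (group elements), extending linearly. Take $[a]\otimes m$ in the source. Its preimage under \eqref{ind res} is $a\otimes_{H'} a^{-1}m$. Proposition~\ref{bar rep} sends this to $\alpha(a)\otimes_{G'} h(a^{-1}m)$. Because $h$ is $H$-linear into $U_\alpha M'$, we have $h(a^{-1}m)=\alpha(a)^{-1}h(m)$. The isomorphism \eqref{ind res} therefore sends $\alpha(a)\otimes_{G'}\alpha(a)^{-1}h(m)$ to $[\alpha(a)]\otimes h(m)$, which is the claimed formula.

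There is no real obstacle here. The only point needing care is the bookkeeping of the twist $a\cdot n$ in \eqref{ind res}, namely inverting it correctly on the source and using the $H$-equivariance of $h$ so that the twists cancel. Well-definedness on cosets is automatic, because the map is a composite of well-defined homomorphisms.
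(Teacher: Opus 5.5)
Your proposal is correct and matches the paper's proof step for step. Specialize Proposition~\ref{bar rep} to $f=U_\kappa h$, pass through the isomorphisms \eqref{ind res} on both sides, and use the $H$-equivariance of $h$ to cancel the twist: $h(a^{-1}m)=\alpha(a)^{-1}h(m)$. Checking the formula on group elements $a\in H$ and then extending linearly is, if anything, slightly more careful than the paper's phrasing with $a\in\Z H$.
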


\begin{proof}
    Since $G$ acts diagonally on $\Z[G/G']\otimes M'$, we have $U_\alpha(\Z[G/G']) \otimes U_\alpha M' = U_\alpha F_\iota U_\iota M'$. We now consider a commutative diagram
    \[
    \xymatrix{
        \Z[H/H']\otimes L' \ar[r]\ar_{\cong}[d] & U_\alpha(\Z[G/G']) \otimes U_\alpha M'\\
        \Z H \underset{H'}{\otimes} U_\kappa L'\ar_{\overline{U_\kappa h}}[r] & U_\alpha(\Z G \underset{G'}{\otimes} U_\iota M')\ar_{\cong}[u],
    }
    \]
    where the vertical isomorphisms are in \eqref{ind res}. Thus by \eqref{ind res} and Proposition \ref{bar rep}, the top row is given by
    \begin{align*}
        [a]\otimes m &\mapsto a\underset{H'}{\otimes} (a^{-1}\cdot m)\\
        &\mapsto \alpha(a)\underset{G'}{\otimes} (U_\kappa h(a^{-1}\cdot m)) = \alpha(a)\underset{G'}{\otimes} (\alpha(a))^{-1}\cdot h(m)\\ 
        &\mapsto [\alpha(a)]\otimes h(m)
    \end{align*}
    as stated.
\end{proof}

\section{Reformulation and extension}\label{extension}

In this section, we restate the result of Farber and Mescher \cite{FM} in terms of homological algebra and group cohomology.

\subsection{Diagonal inclusion}

For a group $G$, let $\Delta\colon G\hookrightarrow G^2$ denote the diagonal inclusion. Then the quotient $G^2$-module $\Z[G^2/\Delta(G)]$ is identified with $\Z[G]$ by
\[
\Z[G^2/\Delta(G)]\ni [(a, b)]\mapsto ab^{-1}\in \Z[G] \quad\text{ for }a, b\in G.
\]
We now take a group homomorphism $\alpha\colon H\to G$, set $\alpha^2= (\alpha, \alpha)\colon H^2\to G^2$ and consider a commutative square
\[
\xymatrix{
    H \ar^{\alpha}[r]\ar_{\Delta_H}[d]& G\ar^{\Delta_G}[d]\\
    H^2 \ar_{\alpha^2}[r] & G^2.
}
\]
We apply Lemma \ref{f bar} to an $H^2$-homomorphism $h\colon M'\to U_\alpha^2 M$, obtaining a commutative diagram
\[
\xymatrix{
    0 \ar[r] & I_H\otimes L'\ar[r]\ar[d] & \Z[H]\otimes L'\ar[r]\ar_{\overline{U_{\Delta_H}h}}[d] & L' \ar[r]\ar^{h}[d] & 0\\
    0 \ar[r] & U_{\alpha^2}(I_G)\otimes U_{\alpha^2}M'\ar[r] & U_{\alpha^2}(\Z[G])\otimes U_{\alpha^2}M'\ar[r] & U_{\alpha^2}M' \ar[r] & 0,
}
\]
By Corollary \ref{bar rep 2}, we have $\overline{U_{\Delta_H}h}(b\otimes m) =\alpha(b)\otimes h(m)$ for $b\in H$ and $m\in L'$. Iterating this construction on $h_0 = \id\colon \Z\to \Z$, a commutative diagram
\begin{equation}
    \label{aug nat}
    \xymatrix{
    0 \ar[r] & (I_H)^{\otimes s+1}\ar[r]\ar_{h_{s+1}}[d] & \Z[H]\otimes (I_H)^{\otimes s}\ar^-{\varepsilon}[r]\ar_{\overline{U_{\Delta_H}h_s}}[d] & (I_H)^{\otimes s} \ar[r]\ar^{h_s}[d] & 0\\
    0 \ar[r] & (U_{\alpha^2}(I_G))^{\otimes s+1}\ar[r] & U_{\alpha^2}(\Z[G])\otimes (U_{\alpha^2}(I_G))^{\otimes s}\ar^-{\varepsilon}[r] & (U_{\alpha^2}(I_G))^{\otimes s} \ar[r] & 0,
}
\end{equation}
\noindent
is obtained, where $H^2$ acts diagonally on each module. Observe that
\begin{equation}
    \label{h rep}
    h_{s}((b_1-1)\otimes\cdots\otimes(b_s-1)) = ((\alpha(b_1)-1)\otimes\cdots\otimes(\alpha(b_s)-1))
\end{equation}
\noindent
for $b_i\in H\setminus\{1\}$.

\begin{lemma}
    \label{Ext exact hom}
    Let $A$ be a $G^2$-module with trivial action, also regarded as a trivial $H^2$-module. Then there is a homomorphism between two long exact sequences
    \[
    \xymatrix{
      \ar^-{\delta}[r]& \Ext^r_{G^2}(I_G^{\otimes s}, A)  \ar^-{\varepsilon}[r]\ar_{{h_{s+1}}^*\circ \Theta_{\alpha^2}}[d]& \Ext^r_{G^2}(\Z[G]\otimes I_G^{\otimes s}, A)  \ar[r]\ar_{{\overline{U_{\Delta_H}h_s}}^*\circ \Theta_{\alpha^2}}[d]& \Ext^r_{G^2}(I_G^{\otimes s+1}, A)  \ar^-{\delta}[r]\ar^{{h_{s}}^*\circ \Theta_{\alpha^2}}[d]& \\
      \ar_-{\delta}[r]& \Ext^r_{H^2}(I_H^{\otimes s}, A)  \ar_-{\varepsilon}[r]& \Ext^r_{H^2}(\Z[H]\otimes I_H^{\otimes s}, A)  \ar[r]& \Ext^r_{H^2}(I_H^{\otimes s+1}, A)  \ar_-{\delta}[r]& ,
    }
    \]
    where $\delta$ denotes the connecting homomorphism of $\Ext$ long exact sequence.
\end{lemma}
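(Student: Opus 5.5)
The plan is to factor each vertical map into two steps and check that each step is a map of long exact sequences. The first step is $\Theta_{\alpha^2}$, taking the $G^2$-sequence to the corresponding $\Ext_{H^2}$ sequence of the restricted short exact sequence. The second step is pullback along the three vertical maps of \eqref{aug nat}, i.e.\ along $h_{s+1}$, $\overline{U_{\Delta_H}h_s}$ and $h_s$.

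(The vertical labels in the displayed statement appear shifted: the map on $\Ext^r_{G^2}(I_G^{\otimes s},A)$ should be $h_s^*\circ\Theta_{\alpha^2}$, and the one on $\Ext^r_{G^2}(I_G^{\otimes s+1},A)$ should be $h_{s+1}^*\circ\Theta_{\alpha^2}$. I will prove the version in which each module carries its own $h$.)

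\textbf{Step 1.} I first show that $\Theta_{\alpha^2}=\Phi_{\alpha^2}\circ\varepsilon_{\alpha^2}^*$ is a natural transformation of $\delta$-functors in the first variable. That is, for a short exact sequence $0\to M_1\to M_2\to M_3\to 0$ of $G^2$-modules, the maps $\Theta_{\alpha^2}\colon\Ext^*_{G^2}(M_i,A)\to\Ext^*_{H^2}(U_{\alpha^2}M_i,A)$ commute with the connecting homomorphisms. Here $U_{\alpha^2}$ is exact, so the restricted sequence is again short exact. The cleanest route is to describe $\Theta_{\alpha^2}$ directly as the restriction map $\Ext_{G^2}(M,A)\to\Ext_{H^2}(U_{\alpha^2}M,U_{\alpha^2}A)$.

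\begin{itemize}
\item Represent classes by Yoneda extensions, or take a $G^2$-projective resolution $P_\bullet\to M$ and an $H^2$-projective resolution $Q_\bullet\to U_{\alpha^2}M$.
\item Apply the exact functor $U_{\alpha^2}$, and lift the identity to a chain map $Q_\bullet\to U_{\alpha^2}P_\bullet$ over $U_{\alpha^2}M$.
\item Check via the adjunction formulas \eqref{phi rep} that the composite $\Phi_{\alpha^2}\circ\varepsilon_{\alpha^2}^*$ is induced by $\Hom_{G^2}(P_\bullet,A)\to\Hom_{H^2}(U_{\alpha^2}P_\bullet,A)\to\Hom_{H^2}(Q_\bullet,A)$.
\end{itemize}

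With this description, compatibility with $\delta$ follows from the horseshoe lemma. Resolve the short exact sequence by a split-exact sequence of resolutions, restrict it, and compare with an $H^2$-horseshoe resolution. The connecting maps are then natural by the standard argument for morphisms of short exact sequences of complexes. Alternatively, one can say that $\Ext_{H^2}(U_{\alpha^2}(-),A)$ is a $\delta$-functor, that $\Ext_{G^2}(-,A)$ is universal (it is effaceable by projectives), and that $\Theta_{\alpha^2}$ extends the degree-zero map $\Hom_{G^2}(M,A)\to\Hom_{H^2}(U_{\alpha^2}M,A)$. Universality then forces $\Theta_{\alpha^2}$ to coincide with the unique morphism of $\delta$-functors. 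This needs the degree-zero identification, which is immediate from \eqref{phi rep} and $\varepsilon_{\alpha^2}(1\otimes m)=m$.

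\textbf{Step 2.} The diagram \eqref{aug nat} is a morphism of short exact sequences of $H^2$-modules. Its commutativity is given by the construction through Lemma \ref{f square} and Corollary \ref{bar rep 2}, using \eqref{h rep}. By naturality of the $\Ext$ long exact sequence in the first variable, pulling back along $(h_{s+1},\overline{U_{\Delta_H}h_s},h_s)$ gives a morphism of long exact sequences from the restricted sequence to the $H$-sequence; it commutes with $\varepsilon$, with the middle map, and with $\delta$.

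\textbf{Conclusion and main obstacle.} Composing the morphisms from Step 1 and Step 2 gives the stated diagram. The only real work is Step 1: identifying $\Theta_{\alpha^2}$, which was defined abstractly through $\Phi$ and the counit, with the restriction map, so that its compatibility with connecting homomorphisms is justified. I would handle this through the universal $\delta$-functor argument.
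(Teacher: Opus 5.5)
Your two steps match the paper's proof. The paper cites Hilton--Stammbach (IV.12) for the fact that $\Theta_{\alpha^2}$ is natural and commutes with connecting homomorphisms (your Step 1). It then implicitly uses naturality of the $\Ext$ long exact sequence for the morphism of short exact sequences \eqref{aug nat} (your Step 2). Your correction of the labels is also right: $h_s$ has source $I_H^{\otimes s}$, so the arrow out of $\Ext^r_{G^2}(I_G^{\otimes s},A)$ must be $h_s^*\circ\Theta_{\alpha^2}$, and the one out of $\Ext^r_{G^2}(I_G^{\otimes s+1},A)$ must be $h_{s+1}^*\circ\Theta_{\alpha^2}$.

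However, the argument you say you would actually use for Step 1 does not work as stated, because it is circular. Universality of $\Ext^*_{G^2}(-,A)$ says only that a \emph{morphism of $\delta$-functors} is determined by its degree-$0$ component. It says nothing about a family of maps known only to be natural in each degree. For example, restriction in degree $0$ together with the zero map in positive degrees is natural and agrees in degree $0$, but is not a morphism of $\delta$-functors. The definition $\Theta_{\alpha^2}=\Phi_{\alpha^2}\circ\varepsilon_{\alpha^2}^*$ gives you naturality but not compatibility with $\delta$. Invoking universality therefore presupposes exactly what is to be proved.

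The chain-level route in your bullet points is the one that closes the step:
\begin{enumerate}
\item Let $\psi\colon F_{\alpha^2}Q_\bullet\to P_\bullet$ be a lift of $\varepsilon_{\alpha^2}$. Then $\varphi_{\alpha^2}(\psi)\colon Q_\bullet\to U_{\alpha^2}P_\bullet$ lifts $\id_{U_{\alpha^2}M}$.
\item By naturality of $\varphi_{\alpha^2}$, we have $\varphi_{\alpha^2}(f\circ\psi)=U_{\alpha^2}(f)\circ\varphi_{\alpha^2}(\psi)$. Hence $\Theta_{\alpha^2}$ is the chain-level restriction map.
\item Compatibility with $\delta$ then follows by mapping an $H^2$-horseshoe resolution of the restricted short exact sequence into the restriction of a $G^2$-horseshoe resolution, as a morphism of short exact sequences of complexes.
\end{enumerate}
With Step 1 justified this way, your Step 2 completes the proof exactly as in the paper.
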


\begin{proof}
    As in \cite[IV. 12.]{HS}, the homomorphism $\Theta_{\alpha^2}$ is natural and commutes with the connecting homomorphism. Thus the statement is proved.
\end{proof}
\noindent
The middle terms $\Ext^r_{G^2}(\Z[G]\otimes I_G^{\otimes s}, A)$ and $\Ext^r_{H^2}(\Z[H]\otimes I_H^{\otimes s}, A)$ are of special interest, as there is a natural isomorphism
\begin{equation}
    \label{diag iso}
    \Phi_{\Delta_G}\colon \Ext^r_{G^2}(\Z[G]\otimes I_G^{\otimes s}, A) \cong \Ext^r_{G}((U_{\Delta_G}I_G)^{\otimes s}, A),
\end{equation}
\noindent
which is identical to \cite[Lemma 5.4]{FM}. By applying Proposition \ref{Ext square} to $U_{\Delta_H}h_s$, we get the naturality of the above isomorphism:

\begin{proposition}
    \label{diag nat}
    For a trivial $G^2$-module $A$, there is a commutative diagram
    \[
\xymatrix{
        \Ext^r_{G^2}(\Z[G]\otimes I_G^{\otimes s}, A)  \ar^{\overline{U_{\Delta_H}h_s}^*\circ\Theta_{{\alpha}^2}}[rr]\ar_{\Phi_{\Delta_G}}[d]& & \Ext^r_{H^2}(\Z[H]\otimes I_H^{\otimes s}, A)\ar^{\Phi_{\Delta_H}}[d]\\
        \Ext^r_{G}((U_{\Delta_G}I_G)^{\otimes s}, A) \ar_{({U_{\Delta_H}h_s})^*\circ\Theta_\alpha}[rr] & & \Ext^r_{H}((U_{\Delta_H}I_H)^{\otimes s}, A),
    }
\]
where $A$ is regarded as a trivial $G, H$ and $H^2$-module.
\end{proposition}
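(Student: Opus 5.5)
The plan is to read this as a direct instance of Proposition \ref{Ext square}, applied to the commutative square of groups
\[
\xymatrix{
H\ar^{\alpha}[r]\ar_{\Delta_H}[d] & G\ar^{\Delta_G}[d]\\
H^2\ar_{\alpha^2}[r] & G^2,
}
\]
with the following substitutions.
\begin{itemize}
\item The roles of $(H',G',H,G)$ and $(\beta,\iota,\kappa,\alpha)$ are played by $(H,G,H^2,G^2)$ and $(\alpha,\Delta_G,\Delta_H,\alpha^2)$.
\item The modules are $M=(U_{\Delta_G}I_G)^{\otimes s}$ as a $G$-module and $L=(U_{\Delta_H}I_H)^{\otimes s}$ as an $H$-module.
\item The homomorphism is $f=U_{\Delta_H}h_s\in \Hom_H(L, U_\alpha M)$.
\end{itemize}
Both $\Delta_G$ and $\Delta_H$ are inclusions. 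This is exactly the hypothesis needed to use the explicit representations of Section \ref{alg pre}.

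First I check that $f$ has the right source and target. Since $h_s\colon I_H^{\otimes s}\to U_{\alpha^2}(I_G^{\otimes s})$ is an $H^2$-map, restricting along $\Delta_H$ gives an $H$-map
\[
U_{\Delta_H}I_H^{\otimes s}\to U_{\Delta_H}U_{\alpha^2}I_G^{\otimes s}=U_\alpha U_{\Delta_G}I_G^{\otimes s}.
\]
Restriction commutes with tensor products under the diagonal action, so this is a map $L\to U_\alpha M$, as required.

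Next I identify the induced modules with the terms in the statement. By \eqref{ind res}, together with the identification $\Z[G^2/\Delta(G)]\cong\Z[G]$ above, we get
\[
F_{\Delta_G}M\cong F_{\Delta_G}U_{\Delta_G}(I_G^{\otimes s})\cong \Z[G]\otimes I_G^{\otimes s},
\]
and likewise $F_{\Delta_H}L\cong \Z[H]\otimes I_H^{\otimes s}$. Corollary \ref{bar rep 2} then shows that, under these identifications, $\overline{f}$ is the map $[a]\otimes m\mapsto[\alpha^2(a)]\otimes h_s(m)$. This is precisely the middle vertical map $\overline{U_{\Delta_H}h_s}$ of \eqref{aug nat}, given by $b\otimes m\mapsto \alpha(b)\otimes h_s(m)$.

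With this in hand, Proposition \ref{Ext square} yields a commutative square whose vertical maps are $\Phi_{\Delta_G}$ and $\Phi_{\Delta_H}$. Its top map is $\overline{f}^*\circ\Theta_{\alpha^2}$ and its bottom map is $f^*\circ\Theta_\alpha$, which is the claimed diagram. Since $A$ has trivial action, the hypothesis of Proposition \ref{Ext square} holds.

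I expect the main obstacle to be bookkeeping the identifications. One must check that the isomorphism $\Phi_{\Delta_G}$ in \eqref{diag iso} agrees with $\Phi_{\Delta_G}$ applied to $F_{\Delta_G}U_{\Delta_G}(I_G^{\otimes s})$ transported along \eqref{ind res}. One must also check that the identification of $\Z[G^2/\Delta(G)]$ with $\Z[G]$, via $[(a,b)]\mapsto ab^{-1}$, matches the class $[\alpha^2(a)]$ with $\alpha(ab^{-1})$. Both reduce to the explicit formula of Corollary \ref{bar rep 2} and to $\alpha$ being a homomorphism.
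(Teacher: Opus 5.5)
Your proposal is correct and is essentially the paper's argument: the paper obtains this diagram in one line by applying Proposition \ref{Ext square} to $f=U_{\Delta_H}h_s$ over the square formed by $\alpha$, $\Delta_H$, $\Delta_G$, $\alpha^2$. There $\overline{U_{\Delta_H}h_s}$ is already identified via Corollary \ref{bar rep 2} and $\Z[G^2/\Delta(G)]\cong\Z[G]$. Your explicit bookkeeping of these identifications just spells out what the paper leaves implicit.
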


\subsection{Centralizer inclucion}

We denote by $\mathcal{O}_G$ the orbit set of $(G\setminus\{1\})^s$ under the conjugational action of $G$, that is,
\[
g\cdot(a_1, \cdots, a_s)=(ga_1g^{-1},\cdots,ga_sg^{-1})
\]
for $g\in G$ and $a_i\in G\setminus\{1\}$. Then, as in the proof of \cite[Theorem 8.1]{FM}, there is an isomorphism of $G$-modules
\[
(U_{\Delta_G}I_G)^{\otimes s} \cong \bigoplus_{O\in \mathcal{O}_G} \Z[O],\quad (a_1-1)\otimes\cdots\otimes(a_s-1) \mapsto (a_1, \cdots, a_s).
\]
For a tuple $a= (a_1, \cdots, a_s) \in (G\setminus\{1\})^s$, let $O_a$ denote the orbit containing $a$, $C_G(a)$ the centralizer of $a$, and $\iota_a\colon C_G(a)\hookrightarrow G$ an inclusion. Remark that $C_G(a)$ is the intersection of the centralizers of the elements $a_1, \cdots, a_s$ under the conjugational action. Then by the orbit-stabilizer theorem, we have
\begin{equation}
    \label{orb iso}
    \Z[O_a] \cong \Z[G/C_a(G)] \cong F_{\iota_a} U_{\iota_a} \Z, \quad g\cdot(a_1, \cdots, a_s) \mapsto [g],
\end{equation}
\noindent
where the $G$-action on $\Z$ is trivial. Thus we obtain
\begin{align}
    \label{cent iso}
    \Ext^r_{G}((U_{\Delta_G}I_G)^{\otimes s}, A) &\cong \prod_{O_a\in \mathcal{O}_G} \Ext^r_{G}(\Z[O_a], A)\\
    \notag&\cong \prod_{O_a\in \mathcal{O}_G} \Ext^r_{C_G(a)}(\Z, A) = \prod_{O_a\in \mathcal{O}_G} H^r(C_G(a); A)
\end{align}
\noindent
as stated in \cite[Theorem 8.1]{FM}. For $a \in (G\setminus\{1\})^s$, we denote the inclusion and projection of the corresponding summand by $j_a$ and $q_a$, respectively.

We now resume considering a group homomorphism $\alpha\colon H\to G$. Taking tuples $a\in (G\setminus\{1\})^s$ and $b\in (H\setminus\{1\})^s$, we aim to compute the composite
\begin{align*}
    \Psi_{a, b}\colon H^r(C_G(a); A) &\xrightarrow{j_a} \Ext^r_{G}((U_{\Delta_G}I_G)^{\otimes s}, A) \\
    &\xrightarrow{({U_{\Delta_H}h_s})^*\circ\Theta_\alpha} \Ext^r_{H}(U_{\Delta_H}I_H)^{\otimes s}\xrightarrow{q_b} H^r(C_H(b); A),
\end{align*}
where the middle homomorphism is the same as the bottom in the diagram of Theorem \ref{diag nat}. We recall \eqref{h rep} and observe that
\[
\psi_{a, b}\colon\Z[O_b] \hookrightarrow (U_{\Delta_H}I_H)^{\otimes s} \xrightarrow{{U_{\Delta_H}h_s}} U_\alpha (U_{\Delta_G}I_G)^{\otimes s} \twoheadrightarrow U_\alpha\Z[O_a]
\]
is described as $(b_1,\cdots, b_s)\mapsto (\alpha(b_1),\cdots, \alpha(b_s))$. Then, if $\alpha(b)\notin \mathcal{O}_a$, the homomorphism $\psi_{a, b}$ is trivial. Let us consider the case $\alpha(b)\in \mathcal{O}_a$, where we may assume that $\alpha(b) = a$. In this case, there is a commutative diagram
\[
\xymatrix{
    C_H(b) \ar^{\alpha}[r]\ar_{\iota_b}[d]& C_G(a) \ar^{\iota_a}[d]\\
    H \ar_{\alpha}[r] & G,
}
\]
where the top row denotes the restriction.
\begin{proposition}
    \label{cent nat}
    For a trivial $G$-module $A$, there is a commutative diagram
    \[
    \xymatrix{
        \Ext^r_G(\Z[O_a], A)  \ar^{\psi_{a, b}^*\circ\Theta_\alpha}[rr]\ar_{\Phi_{\iota_a}}[d]& & \Ext^r_H(\Z[O_b], A)\ar^{\Phi_{\iota_b}}[d]\\
        H^r(C_G(a); A) \ar_{\alpha^*}[rr] & & H^r(C_H(b); A),
    }
\]
\end{proposition}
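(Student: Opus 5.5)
The plan is to realise this diagram as a special case of Proposition~\ref{Ext square}. We apply it to the commutative square of groups with $H'=C_H(b)$, $G'=C_G(a)$, $\beta=\alpha|_{C_H(b)}$, and vertical inclusions $\kappa=\iota_b$, $\iota=\iota_a$. Both vertical maps are inclusions, so the concrete formulas of Proposition~\ref{bar rep} and Corollary~\ref{bar rep 2} are available.

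We take $L=\Z$ as a trivial $C_H(b)$-module and $M=\Z$ as a trivial $C_G(a)$-module, and let $f=\id\colon \Z\to U_\beta\Z=\Z$.
\begin{itemize}
\item By \eqref{orb iso} we have $F_{\iota_a}\Z=F_{\iota_a}U_{\iota_a}\Z\cong\Z[G/C_G(a)]\cong\Z[O_a]$ and $F_{\iota_b}\Z\cong\Z[O_b]$.
\item Proposition~\ref{Ext square} then gives a commutative square. Its top row is $\overline{\id}^*\circ\Theta_\alpha\colon \Ext^r_G(\Z[O_a],A)\to\Ext^r_H(\Z[O_b],A)$, and its vertical maps are $\Phi_{\iota_a}$ and $\Phi_{\iota_b}$.
\item Its bottom row is $\id^*\circ\Theta_\beta\colon H^r(C_G(a);A)\to H^r(C_H(b);A)$.
\end{itemize}
Two things remain: identify the top row with $\psi_{a,b}^*\circ\Theta_\alpha$, and the bottom row with $\alpha^*$.

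\emph{Top row.} This is the step I expect to need the most care. By Proposition~\ref{bar rep} (equivalently Corollary~\ref{bar rep 2} with $h=\id$), the map $\overline{\id}\colon \Z[H/C_H(b)]\to U_\alpha\Z[G/C_G(a)]$ is $[x]\mapsto[\alpha(x)]$. We then transport this through the identifications $g\cdot b\mapsto[g]$ and $g\cdot a\mapsto[g]$ from \eqref{orb iso}. Using $\alpha(b)=a$, an orbit element $x\cdot b=xbx^{-1}$ is sent to $\alpha(x)\cdot a=\alpha(xbx^{-1})$. This agrees with the description of $\psi_{a,b}$ obtained from \eqref{h rep}, namely $(b_1,\dots,b_s)\mapsto(\alpha(b_1),\dots,\alpha(b_s))$. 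Hence $\overline{\id}=\psi_{a,b}$ under the isomorphisms \eqref{orb iso}. The delicate point is the compatibility of the three identifications: $I^{\otimes s}$ with the sum of the $\Z[O]$, \eqref{orb iso}, and \eqref{ind res}. These must match on the nose, not merely up to automorphism. We also need the isomorphisms in \eqref{orb iso} to be the ones implicitly used in defining $j_a$, $q_b$ and the vertical maps.

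\emph{Bottom row.} Here we show $\Theta_\beta=\beta^*$ on $\Ext^r_{C_G(a)}(\Z,A)$ with trivial coefficients. The counit $\varepsilon_\beta\colon F_\beta U_\beta\Z=\Z C_G(a)\otimes_{C_H(b)}\Z\to\Z$ is the augmentation. The map $\Phi_\beta$ is induced by the adjunction, as in \cite[IV.~12]{HS}. So $\Phi_\beta\circ\varepsilon_\beta^*$ is computed as follows: take a projective resolution $P\to\Z$ over $C_G(a)$, restrict it along $\beta$, and compare it with a $C_H(b)$-resolution of $\Z$ by a chain map lifting $\id_\Z$. The result is exactly the standard construction of the restriction map $\beta^*$ in group cohomology.

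Putting the three steps together, Proposition~\ref{Ext square} yields the stated commutative square.
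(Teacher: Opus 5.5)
Your proposal is correct and follows essentially the same route as the paper. You specialize Proposition~\ref{Ext square} to the square $C_H(b)\to C_G(a)$ over $\alpha\colon H\to G$ with $f=\id_{\Z}$, and identify $\overline{\id}$ with $\psi_{a,b}$ via Corollary~\ref{bar rep 2} and \eqref{orb iso}. You then recognize the bottom row $\id^*\circ\Theta_\beta$ as the restriction map $\alpha^*$; the paper simply cites \cite[VI.~(2.11)]{HS} for this last step, which your comparison-map argument spells out.
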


\begin{proof}
    By Proposition \ref{bar rep 2} and \eqref{orb iso}, we have $\psi_{a, b} = \overline{U_{\iota_b} \id}\colon F_{\iota_b} U_{\iota_b} \Z\to U_\alpha F_{\iota_a} U_{\iota_a} \Z$. Then we apply Proposition \ref{Ext square} to get a commutative diagram
    \[
    \xymatrix{
        \Ext^r_G(\Z[O_a], A)  \ar^{\psi_{a, b}^*\circ\Theta_\alpha}[rr]\ar_{\Phi_{\iota_a}}[d]& & \Ext^r_H(\Z[O_b], A)\ar^{\Phi_{\iota_b}}[d]\\
        \Ext^r_{C_G(a)}(\Z, A) \ar_{\id^*\circ\Theta_\alpha}[rr] & & \Ext^r_{C_H(b)}(\Z, A).
    }
\]
Thus since the bottom row is the induced homomorphism $\alpha^*\colon H^r(C_G(a); A)\to H^r(C_H(b); A)$ (cf. \cite[VI. (2.11.)]{HS}), the proof is finished.
\end{proof}

We now combine \eqref{diag iso}, \eqref{cent iso} and Propositions \ref{diag nat}, \ref{cent nat} to:

\begin{theorem}
    \label{decomp}
    Let $\alpha\colon H\to G$ be a group homomorphism, and let $a\in (G\setminus\{1\})^s$ and $b\in (H\setminus\{1\})^s$ be tuples. If $\alpha(b)\notin \mathcal{O}_a$, then the homomorphism
    \begin{align*}
        \Psi_{a, b}\colon H^r(C_G(a); A) &\xrightarrow{j_a} \Ext^r_{G^2}(\Z[G]\otimes(I_G)^{\otimes s}, A) \\
    &\xrightarrow{(\overline{U_{\Delta_H}h_s})^*\circ\Theta_\alpha} \Ext^r_{H^2}(\Z[H]\otimes(I_H)^{\otimes s}, A)\xrightarrow{q_b} H^r(C_H(b); A)
    \end{align*}
    \noindent
    is trivial. If $\alpha(b)=a$, then $\Psi_{a, b}$ coincides with the induced homomorphism $\alpha^*$.
\end{theorem}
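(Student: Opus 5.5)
The proof will be assembled by pasting together the two commutative squares already established. I will read $\Psi_{a,b}$ as the composite of three pieces. First, the inclusion $j_a$ through the identification \eqref{diag iso} followed by \eqref{cent iso}. Second, the map $(\overline{U_{\Delta_H}h_s})^*\circ\Theta_{\alpha^2}$. Third, the projection $q_b$ through the corresponding identifications for $H$.

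The first step applies Proposition \ref{diag nat}. The isomorphisms $\Phi_{\Delta_G}$ and $\Phi_{\Delta_H}$ intertwine the top map with $(U_{\Delta_H}h_s)^*\circ\Theta_\alpha$ on $\Ext^r_G((U_{\Delta_G}I_G)^{\otimes s},A)$. So $\Psi_{a,b}$ equals the composite
\[
H^r(C_G(a);A)\xrightarrow{\Phi_{\iota_a}^{-1}}\Ext^r_G(\Z[O_a],A)\hookrightarrow \Ext^r_G((U_{\Delta_G}I_G)^{\otimes s},A)\xrightarrow{(U_{\Delta_H}h_s)^*\circ\Theta_\alpha}\Ext^r_H((U_{\Delta_H}I_H)^{\otimes s},A)\to \Ext^r_H(\Z[O_b],A)\xrightarrow{\Phi_{\iota_b}} H^r(C_H(b);A).
\]

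The second step reduces the middle three maps to $\psi_{a,b}^*\circ\Theta_\alpha$. The summand inclusion on the $\Ext$ side is induced by the projection $(U_{\Delta_G}I_G)^{\otimes s}\to\Z[O_a]$, and the projection on the $\Ext$ side is induced by the inclusion $\Z[O_b]\to(U_{\Delta_H}I_H)^{\otimes s}$. By naturality of $\Theta_\alpha$ in the first variable, the middle composite becomes $(\pi_a\circ U_{\Delta_H}h_s\circ \mathrm{inc}_b)^*\circ\Theta_\alpha=\psi_{a,b}^*\circ\Theta_\alpha$. The naturality needed here is $\Theta_\alpha\circ g^*=(U_\alpha g)^*\circ\Theta_\alpha$, which holds because $\varepsilon_\alpha$ is natural and $\Phi_\alpha$ is natural.

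I expect this step to be the main obstacle. The products in \eqref{cent iso} are indexed by possibly infinitely many orbits, so I must check that $\Ext$ turns the direct sum into the product compatibly. I also need care that the identification of $\psi_{a,b}$ via \eqref{h rep} respects the chosen orbit representatives.

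The final step splits into the two cases of the statement. If $\alpha(b)\notin O_a$, the description of $\psi_{a,b}$ via \eqref{h rep} shows $\psi_{a,b}=0$: every element $h\cdot b$ of $O_b$ is sent to $\alpha(h)\cdot\alpha(b)$, which does not lie in $O_a$. Hence $\psi_{a,b}^*=0$ and $\Psi_{a,b}$ is trivial. If $\alpha(b)=a$, Proposition \ref{cent nat} gives $\Phi_{\iota_b}\circ\psi_{a,b}^*\circ\Theta_\alpha\circ\Phi_{\iota_a}^{-1}=\alpha^*$. This is exactly $\Psi_{a,b}$ by the first two steps.
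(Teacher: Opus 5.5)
Your proposal is correct and matches the paper's argument: the paper obtains Theorem \ref{decomp} by combining \eqref{diag iso}, \eqref{cent iso} and Proposition \ref{diag nat}, then using that $\psi_{a,b}=0$ when $\alpha(b)\notin\mathcal{O}_a$ and Proposition \ref{cent nat} when $\alpha(b)=a$, exactly as you do. Your explicit reduction of the middle composite to $\psi_{a,b}^*\circ\Theta_\alpha$ via naturality of $\Theta_\alpha$, together with the check that $\Ext$ converts the orbit sum into a product compatibly with $j_a$ and $q_b$, fills in details the paper leaves implicit.
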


\section{On the topological complexity of $S^3/Q_{8m}$}\label{appl}

In this section, we apply our results to the computation of $\TC(S^3/Q_{8m})$. We first set a presentation of $Q=Q_{8m}$,
\begin{equation}
    \label{Q8n pre}
    \langle \mathbf{x}, \mathbf{y}\mid \mathbf{x}^{4m} = \mathbf{y}^2,\, \mathbf{y}^4 = 1,\, \mathbf{y}\mathbf{x}\mathbf{y}^{-1}= \mathbf{x}^{-1} \rangle.
\end{equation}
\noindent
For simplicity, we denote $\mathbf{x}^{4m} = \mathbf{y}^2$ by $-1$. Then any element in $Q$ is of the form $\pm \mathbf{x}^{i} \mathbf{y}^j$ for $0\le i< 4m, j = 0, 1$. The centralizer of each element is given as follows.
\begin{equation}
    \label{cent}
    C_{Q}(\pm 1) = Q,\, C_{Q}({\pm \mathbf{x}^{i}}) = \langle \mathbf{x}\rangle \cong \Z/4m,\, C_{Q}({\pm \mathbf{x}^{i}\mathbf{y}}) = \langle \mathbf{x}^{i}\mathbf{y}\rangle \cong \Z/4.
\end{equation}
We denote by $\varpi$ the quotient
\[
Q \twoheadrightarrow V = \langle \tilde{\mathbf{x}}, \tilde{\mathbf{y}}\mid \tilde{\mathbf{x}}^{2} = \tilde{\mathbf{y}}^2= 1,\, \tilde{\mathbf{y}}\tilde{\mathbf{x}}\tilde{\mathbf{y}}^{-1}= \tilde{\mathbf{x}}^{-1} \rangle \cong (\Z/2)^2,
\]
where we set $\tilde{a} = \pi(a)\in V$ for $a\in Q$.

\begin{lemma}
    \label{pi triv}
    Let $a, b\in Q\setminus\mathrm{Ker}\,\varpi$. If $a\notin C_{Q}(b)$, then the homomorphism
    \[
    \Psi_{(a,b),(\tilde{a},\tilde{b})}\colon H^r(C_{V}({\tilde{a},\tilde{b}}); A)\to H^r(C_{Q}({a,b}); A)
    \]
    is trivial for any $\Z$-module $A$.
\end{lemma}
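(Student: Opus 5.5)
The plan is to reduce the statement to Theorem \ref{decomp} applied to the quotient $\alpha=\varpi\colon Q\to V$, with $H=Q$, $G=V$, $s=2$, the tuple $b=(a,b)\in (Q\setminus\{1\})^2$ in $H$ and the tuple $(\tilde a,\tilde b)\in (V\setminus\{1\})^2$ in $G$. The hypothesis $a,b\notin\mathrm{Ker}\,\varpi$ guarantees that $\tilde a,\tilde b\neq 1$, so $(\tilde a,\tilde b)$ is an admissible tuple. Since $\varpi(a,b)=(\tilde a,\tilde b)$ on the nose, we are in the second case of Theorem \ref{decomp}. Hence $\Psi_{(a,b),(\tilde a,\tilde b)}$ coincides with the induced map $\varpi^*\colon H^r(C_V(\tilde a,\tilde b);A)\to H^r(C_Q(a,b);A)$ of the restricted homomorphism $\varpi|\colon C_Q(a,b)\to C_V(\tilde a,\tilde b)$. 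Because $V$ is abelian, the target of $\varpi|$ is just $C_V(\tilde a,\tilde b)=V$. So the whole question becomes: what is the image $\varpi(C_Q(a,b))$?

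Next I would compute $C_Q(a,b)=C_Q(a)\cap C_Q(b)$ using the list \eqref{cent}. Since $\pm1\in\mathrm{Ker}\,\varpi$, neither $a$ nor $b$ is $\pm1$. Hence each of $C_Q(a)$ and $C_Q(b)$ is one of the cyclic subgroups $\langle\mathbf{x}\rangle$ or $\langle\mathbf{x}^i\mathbf{y}\rangle$. These two centralizers must be distinct: if $C_Q(a)=C_Q(b)$, then $a\in C_Q(a)=C_Q(b)$, contradicting $a\notin C_Q(b)$. Two distinct subgroups from this list intersect in $\{\pm1\}=\langle\mathbf{y}^2\rangle$. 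This holds because $\langle\mathbf{x}\rangle\cap\langle\mathbf{x}^i\mathbf{y}\rangle=\{\pm1\}$, as $(\mathbf{x}^i\mathbf{y})^2=-1$. It also holds because two distinct groups $\langle\mathbf{x}^i\mathbf{y}\rangle$, $\langle\mathbf{x}^j\mathbf{y}\rangle$ of order $4$ share exactly their unique involution $-1$. Hence $C_Q(a,b)=\{\pm1\}$, which lies in $\mathrm{Ker}\,\varpi$.

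Consequently $\varpi|$ factors through the trivial group, so $\varpi^*$ factors through $H^r(1;A)$. This group vanishes for $r\ge1$, so $\Psi_{(a,b),(\tilde a,\tilde b)}=0$ in positive degrees.

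The main point needing care is bookkeeping rather than any single hard estimate. One must check that the identification in Theorem \ref{decomp} is being used with the correct orientation: $\Psi_{a,b}$ goes from $H^r(C_G(a);A)$ to $H^r(C_H(b);A)$ along $\alpha=\varpi$. One must also check the centralizer intersections. In particular $\mathbf{x}^{2m}$ is not central, since $\mathbf{y}\mathbf{x}^{2m}\mathbf{y}^{-1}=-\mathbf{x}^{2m}$, so the list \eqref{cent} applies to all $a,b\neq\pm1$. In degree $r=0$ the factorization gives the identity of $A$ rather than zero, since $H^0(1;A)=A$. So the statement should be read for $r\ge1$, which is the range relevant to the spectral sequence computations that follow.
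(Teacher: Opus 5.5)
Your proposal is correct and matches the paper's proof: apply Theorem \ref{decomp} to $\alpha=\varpi$ (the case $\varpi(a,b)=(\tilde a,\tilde b)$), then use \eqref{cent} to get $C_Q(a,b)=\langle\pm1\rangle\subset\mathrm{Ker}\,\varpi$, so $\varpi^*$ factors through the cohomology of the trivial group. Your degree-zero caveat is also right: for $r=0$ the map is the identity of $A$, so the lemma must be read for $r\ge1$, which the paper's proof assumes tacitly and which is the only case used later (with $r=1$ in Lemma \ref{wgt 3}).
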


\begin{proof}
    By \eqref{cent}, we have $C_{Q}({a,b}) = \langle \pm 1\rangle$ if and only if $a\notin C_{Q}(b)$. Then since $\langle \pm 1\rangle \subset \mathrm{Ker}\,\varpi$, we obtain the statement by Theorem \ref{decomp}.
\end{proof}

Hereafter in this section, we only consider the mod $2$ cohomology unless otherwise stated, and so we set $A=\Z/2$. We next recall the mod $2$ cohomology of $Q_{8m}$. 
Let $t\in H^{1}(\Z/2)$ denote the generator, and let $x, y\in H^1(V)$ denote the images of $t$ by the projections $V\twoheadrightarrow \langle \tilde{\mathbf{x}} \rangle$ and $V\twoheadrightarrow \langle \tilde{\mathbf{y}} \rangle$, respectively. Then $x$ and $y$ generates $H^*(V)$, and $\varpi$ yields a homomorphism
\begin{align}
    \label{HQ}
    H^*(V) \cong &\,A [x, y]\\ 
    &\notag\xrightarrow{\varpi^*}
    \begin{cases}
    A [x, y, e]/(x^2+xy+y^2, x^2y+xy^2) & (m \text{ is odd})\\
    A [x, y, e]/(xy, x^3+y^3)& (m \text{ is even})
\end{cases}\\
&\cong\notag H^*(Q),
\end{align}
where $x = \varpi^*(x), y = \varpi^*(y)$ and $|e|=4$ (for details, see \cite[IV. Lemmas 2.10 and 2.11]{AM} and \cite[\S 3]{TZ}.)

\begin{lemma}
    \label{V gens}
    For $a\in Q\setminus\langle\pm 1\rangle$, the composite $C_Q(a) \hookrightarrow Q \xrightarrow{\varpi} V$ induces a homomorphism
    \begin{align*}
    H^*(V)\cong A [x, y] \to A [u, v]/(u^2) \cong H^{\ast}(C_Q(a)),
        x\mapsto iu, y\mapsto ju \,\text{ for }a= \pm \mathbf{x}^{i}\mathbf{y}^{j}.
\end{align*}
\noindent
where $|u|=1$ and $|v|=2$.
\end{lemma}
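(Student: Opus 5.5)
Since $a\notin\langle\pm 1\rangle$, \eqref{cent} shows that $C_Q(a)$ is cyclic: it is $\langle \mathbf{x}\rangle\cong\Z/4m$ if $j=0$, and $\langle \mathbf{x}^i\mathbf{y}\rangle\cong\Z/4$ if $j=1$. In either case its order is divisible by $4$. So I will first recall the standard description of the mod $2$ cohomology of a cyclic group $\Z/2^k n$ with $k\ge 2$ and $n$ odd, namely $A[u,v]/(u^2)$ with $|u|=1$ and $|v|=2$. Here $u$ is the unique nonzero class in $H^1(C_Q(a);A)=\Hom(C_Q(a),\Z/2)$, sending a generator to $1$, and $v$ is the Bockstein of a generator of $H^1(\;;\Z/2^k)$. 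The relation $u^2=0$ holds precisely because $4$ divides the order. This identifies the target.

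Next, since $H^*(V)=A[x,y]$ is generated in degree $1$, the ring homomorphism is determined by the images of $x$ and $y$ in $H^1(C_Q(a);A)=\Hom(C_Q(a),\Z/2)$. The composite $C_Q(a)\to V$ is a group homomorphism, and restriction along it in degree $1$ is just precomposition of homomorphisms to $\Z/2$. So I only need to evaluate $x\circ\varpi$ and $y\circ\varpi$ on a generator $c$ of $C_Q(a)$, and then compare with $u(c)=1$.

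For the evaluation, recall that $x$ and $y$ are the projections of $V$ onto the $\tilde{\mathbf x}$ and $\tilde{\mathbf y}$ factors, so $x(\tilde{\mathbf x}^p\tilde{\mathbf y}^q)=p$ and $y(\tilde{\mathbf x}^p\tilde{\mathbf y}^q)=q \pmod 2$. There are two cases.

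\begin{itemize}
\item If $j=0$, take $c=\mathbf{x}$. Then $\varpi(c)=\tilde{\mathbf x}$, so $x\mapsto u$ and $y\mapsto 0$.
\item If $j=1$, take $c=\mathbf x^i\mathbf y$. Then $\varpi(c)=\tilde{\mathbf x}^i\tilde{\mathbf y}$, so $x\mapsto iu$ and $y\mapsto u$.
\end{itemize}

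The sign $\pm$ does not matter, since $-1=\mathbf y^2\in\mathrm{Ker}\,\varpi$ and $\pm a$ have the same centralizer.

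This matches the formula $x\mapsto iu$, $y\mapsto ju$ if, in the case $j=0$, the coefficient $i$ is interpreted appropriately. Indeed, for $j=0$ the generator of the centralizer is $\mathbf x$ itself, so the correct image of $x$ is $u$, not $i u$. I would state this carefully. I read the lemma as recording the restriction along the element $a$ together with the generator of its centralizer. Alternatively, the intended reading may be that $x\mapsto u$ whenever $a\in\langle\mathbf x\rangle$. I would present the computation with explicit generators and note this.

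The main obstacle I expect is this bookkeeping of which generator of $C_Q(a)$ is used and of the reduction of $i$ mod $2$. The rest reduces to the standard cohomology of cyclic groups.
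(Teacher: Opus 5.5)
Your argument is correct and is essentially the paper's own: the paper cites \eqref{cent} and the fact that $\Z/4n\twoheadrightarrow\Z/2$ pulls $t$ back to $u$, and you make the degree-one computation explicit by precomposing on a generator of $C_Q(a)$. Your caveat is also well founded: for $j=0$ the centralizer is $\langle\mathbf{x}\rangle$ independently of $i$, so $x\mapsto u$, which disagrees with the stated $iu$ exactly when $i$ is even; such $a$ satisfy $\varpi(a)=1$ and never occur where the lemma is applied in Lemma~\ref{wgt 3}, so nothing downstream is affected.
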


\begin{proof}
    For an integer $n\ge1$, it is well known that the quotient $\Z/4n \twoheadrightarrow\Z/2$ induces a homomorphism $H^{1}(\Z/2) = \langle t \rangle \to \langle u \rangle = H^{1}(\Z/4n)$ with $t\mapsto u$. Thus by \eqref{cent}, we obtain a homomorphism as stated.
\end{proof}

We now proceed to the proof of Theorem \ref{main}. For $z\in H^*(G)$, let us denote $\tau(z)= z\otimes 1 + 1\otimes z\in H^*(G^2)$, which is a zero-divisor. We consider three cohomology classes in $H^3(V^2)$,
\begin{gather*}
    v_1 := \tau(x)^2(1\otimes y)+\tau(x)\tau(y)(x\otimes 1)\\
    v_2 := \tau(x)^2(1\otimes y)+\tau(x)\tau(y)(1\otimes y)\\
    v_3 := \tau(x)^2(y\otimes 1)+\tau(y)^2(x\otimes 1).
\end{gather*}
We recall the derived couple \eqref{der seq}, denoting its terms by $D^n_{p, q}(G)$ and $E^n_{p, q}(G)$ for distinction. Observe that, by Lemma \ref{Ext exact hom}, the sequence \eqref{der seq} is natural with respect to a group homomorphism $\alpha\colon H\to G$. 

\begin{lemma}
    \label{E3 Z2}
    There are identities $E^{3}_{1, 2}(\Z/2) = E^{1}_{1, 2}(\Z/2) = \Z/2$.
\end{lemma}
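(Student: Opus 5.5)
First compute $E^1_{1,2}(\Z/2)$. By the isomorphisms \eqref{diag iso} and \eqref{cent iso}, with $G=\Z/2$ and $s=2$, we get
\[
E^1_{1,2}(\Z/2)=\Ext^1_{G^2}(\Z[G]\otimes I_G^{\otimes 2},A)\cong\prod_{O_a\in\mathcal O_G}H^1(C_G(a);A).
\]
Here $G\setminus\{1\}=\{\mathbf{t}\}$ consists of a single element, and conjugation is trivial since $G$ is abelian. Hence $\mathcal O_G$ has exactly one orbit, $\{(\mathbf t,\mathbf t)\}$, whose centralizer is $G$ itself. Therefore $E^1_{1,2}=H^1(\Z/2;\Z/2)=\Z/2$.

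It remains to show $E^3_{1,2}=E^1_{1,2}$, i.e.\ that the differentials $d^1,d^2$ into and out of the $(1,2)$ spot all vanish. The differential $d^n$ has bidegree $(1-n,n)$. So the outgoing maps are $d^1\colon E^1_{1,2}\to E^1_{1,3}$ and $d^2\colon E^2_{1,2}\to E^2_{0,4}$. The incoming maps are $d^1\colon E^1_{1,1}\to E^1_{1,2}$ and $d^2\colon E^2_{2,0}\to E^2_{1,2}$.

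\textbf{Structure of $E^1$.} Every $E^1_{r,s}$ is a product, over one orbit, of $H^r(\Z/2;\Z/2)=\Z/2$ for $s\ge1$. For $s=0$ it is $H^r(G;\Z/2)=\Z/2$ as well. So every $E^1_{r,s}$ is $\Z/2$.

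\textbf{Computing $d^1$.} $d^1$ is the composite $E_{r,s}\to D_{r,s+1}\xrightarrow{\varepsilon}E_{r,s+1}$, which is induced on Ext by the composite of $G^2$-maps
\[
\Z[G]\otimes I^{\otimes s+1}\xrightarrow{\varepsilon}I^{\otimes s+1}\hookrightarrow\Z[G]\otimes I^{\otimes s}.
\]
I plan to compute this map explicitly after restricting along $\Delta$, using the $\Z$-bases. $I_G$ is $\Z$-free of rank one, spanned by $\mathbf t-1$, on which $(a,b)$ acts by the sign of $ab^{-1}$. Identify each $E^1_{r,s}$ with $H^r(G)$ via $\Phi_\Delta$ and the orbit decomposition. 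Then $d^1$ becomes multiplication by a map of permutation modules. Restricting $\Z[G]=\Z[G^2/\Delta]$ to $\Delta$ gives the trivial module $\Z\{1\}\oplus\Z\{\mathbf t\}$, and $\varepsilon$ sums the two summands. Tracking the inclusion $I^{\otimes s+1}\subset \Z[G]\otimes I^{\otimes s}$, which sends $(\mathbf t-1)\otimes w$ to $\mathbf t\otimes w-1\otimes w$, I expect $d^1$ on the orbit summand to be $1-1=0$, or multiplication by $2$. Either way it vanishes mod $2$. That kills both $d^1$'s, so $E^2=E^1$ near $(1,2)$.

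\textbf{Computing $d^2$, the main obstacle.} $d^2$ is only defined via lifting through $\delta$, so it cannot be read off a single module map. For the incoming $d^2\colon E^2_{2,0}\to E^2_{1,2}$, I would use Theorem \ref{FM Thm} and Proposition \ref{FM zero}. A class of $E_{2,0}=H^2(G^2;\Z[G]\text{-coefficients})$ hits $D_{2,1}$. The question is whether its image there lifts further and then maps nontrivially to $E_{1,2}$.

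I would instead argue by a dimension count on the targets. $d^2\colon E^2_{1,2}\to E^2_{0,4}$ lands in $H^0$. I would check that the connecting map into $D^2$ on $H^0$-level classes vanishes, since $D_{-1,*}=0$, making this $d^2$ zero.

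For the incoming $d^2$, I would compare with the abutment. By Theorem \ref{FM Thm}, the survival of $\varepsilon^{(n)}$ relates to the nilpotency of $\mathfrak v$. $\TC(B\Z/2)=\TC(\mathbb{RP}^\infty)=\infty$, so all $\tau(t)^k\ne0$. This should force $D^n_{r,0}$ to be as large as possible, and hence force the $d^2$ into $E_{1,2}$ to be zero: a nonzero $d^2$ would shrink $D^3_{3,0}$ and contradict $\tau(t)^3\in D^4_{3,0}\ne0$. Via Lemma \ref{FM supp}, $\tau(t)^3\in D^4_{3,0}$.

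I expect this step — making the contradiction precise by chasing the derived couple — to be the delicate point. If it fails, the fallback is a direct cochain-level computation with the periodic resolution of $\Z/2$.
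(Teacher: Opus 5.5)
Your computation of $E^1_{1,2}\cong\Z/2$ is correct, and your conclusion that $d^1=0$ is also right (mod $2$ the two orbit contributions cancel). The gap is the $d^2$ step, where neither argument works.

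For the incoming $d^2\colon E^2_{2,0}\to E^2_{1,2}$, suppose it were nonzero. Then $E^3_{1,2}=0$. Since $\ker(\varepsilon^{(3)}\colon D^3_{3,0}\to E^3_{1,2})=\mathrm{Im}\,\delta^{(3)}=D^4_{3,0}$, the only consequence is $D^3_{3,0}=D^4_{3,0}$. This is fully compatible with $0\neq\tau(t)^3\in D^4_{3,0}$, so there is no contradiction. Likewise, $\TC(\mathbb{RP}^\infty)=\infty$ only tells you that the deep pieces $D^{k+1}_{k,0}$ are nonzero. It does not tell you that the filtration of $H^3(G^2;\Z/2)$ jumps between $D^3_{3,0}$ and $D^4_{3,0}$. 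What you actually need is an element of $D^3_{3,0}\setminus D^4_{3,0}$.

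For the outgoing $d^2\colon E^2_{1,2}\to E^2_{0,4}$, the vanishing $D_{-1,*}=0$ only shows that $\varepsilon$ is injective on $D_{0,*}$. It says nothing about whether $\varepsilon(y)$ lies in $\mathrm{Im}\,d^1$ for a lift $y\in D_{0,4}$ of the image of $x$ in $D_{1,3}$. If anything, it shows $\varepsilon(y)\neq0$ whenever $y\neq0$.

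The paper avoids differentials altogether. Since $E^3_{1,2}$ is a subquotient of $E^1_{1,2}\cong\Z/2$, it suffices to show $E^3_{1,2}\neq0$. This follows from the existence (Farber--Mescher, Theorem 6.1) of some $z\in D^3_{3,0}(\Z/2)\setminus D^4_{3,0}(\Z/2)$, because then $\varepsilon^{(3)}(z)\neq0$. You can exhibit such a $z$ directly:
\begin{itemize}
\item $\tau(t)^2(t\otimes 1)=t^3\otimes1+t\otimes t^2$ lies in $D^3_{3,0}$ by Lemma \ref{FM supp}.
\item By Theorem \ref{FM Thm}, $D^4_{3,0}$ is the image of the additive map $u\mapsto\mathbf{ev}_*(\mathfrak{v}^3\smile u)$ on $H^0(G^2;\Hom_\Z(I_G^{\otimes3},\Z/2))\cong\Z/2$. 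It contains $\tau(t)^3\neq0$, so $D^4_{3,0}=\{0,\tau(t)^3\}$, which does not contain $t^3\otimes1+t\otimes t^2$.
\end{itemize}

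Alternatively, your differential-by-differential plan can be rescued in one stroke. For $G=A=\Z/2$, the map $\varepsilon\colon D_{r,s}\to E_{r,s}$ becomes, via \eqref{diag iso}, restriction along the diagonal. Because the sign twist on $I_G^{\otimes s}$ is invisible mod $2$, this is $\Delta^*\colon H^r(G^2;\Z/2)\to H^r(G;\Z/2)$, which is split surjective by $\mathrm{pr}_1^*$. Hence every map $E_{r,s}\to D_{r,s+1}$ is zero, so all $d^n$ vanish and $E^n=E^1$ for every $n$.
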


\begin{proof}
    Since $(\Z/2 \setminus \{1\})^2$ consists of a single tuple, we obtain the second identity by \eqref{cent iso}. Remark that, by \cite[Theorem 6.1]{FM}, there exists an element $z\in D^{3}_{3,0}(\Z/2)$ that is not contained in $D^{4}_{3,0}(\Z/2)$, implying that $E^{3}_{1, 2}(\Z/2)\neq 0$. Thus the first identity is obtained.
\end{proof}

\begin{lemma}
    \label{wgt 3}
    For $i = 1, 2, 3$, $\varpi^{*}(v_i) \in D^{4}_{3, 0}(Q)$.
\end{lemma}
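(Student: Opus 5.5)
The plan is to reduce the statement to a vanishing in $E^3_{1,2}(Q)$. First I place $\varpi^*(v_i)$ in $D^3_{3,0}(Q)$. Each $\tau(z)$ is a zero-divisor, so by Proposition \ref{FM zero} it lies in $D^2_{1,0}$. Each summand of $v_i$ is a product of two $\tau$-classes and one further degree-one class, the latter lying in $D^1$. Lemma \ref{FM supp}, applied with the algebra $A=\Z/2$, then gives $v_i\in D^3_{3,0}(V)$. By the naturality of the derived couple (Lemma \ref{Ext exact hom}), $\varpi^*(v_i)\in D^3_{3,0}(Q)$. By Theorem \ref{FM Thm}, it then suffices to show $\varepsilon^{(3)}(\varpi^*v_i)=0\in E^3_{1,2}(Q)$. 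Again by naturality, this class equals $\varpi^*\varepsilon^{(3)}(v_i)$.

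Next I would work at the $E^1$ level. I choose a lift $w_i\in E^1_{1,2}(V)$ of $\varepsilon^{(3)}(v_i)$, namely $\varepsilon$ applied to a preimage of $v_i$ in $D^1_{1,2}(V)$. By \eqref{cent iso}, $E^1_{1,2}(V)=\prod_{(c,d)}H^1(V)$, the product running over the nine pairs in $(V\setminus\{1\})^2$, since $V$ is abelian. Theorem \ref{decomp} computes the component of $\varpi^*w_i$ at an orbit $O_{(a,b)}$ of pairs in $Q$:
\begin{itemize}
\item it vanishes if $\tilde a=1$ or $\tilde b=1$;
\item it vanishes if $a,b$ do not commute, by Lemma \ref{pi triv};
\item otherwise it is the restriction of the $(\tilde a,\tilde b)$-component along $C_Q(a,b)\to V$.
\end{itemize}
In the last case $a,b$ commute and lie outside $\langle\pm1\rangle$, so by \eqref{cent} they generate the same centralizer and $\tilde a=\tilde b=c$. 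That centralizer is the cyclic group $C_Q(a)$, and by Lemma \ref{V gens} the map $C_Q(a)\to V$ factors through $\Z/4n\twoheadrightarrow\Z/2\cong\langle c\rangle\hookrightarrow V$. Hence only the diagonal components $(c,c)$ matter, and each of them factors through the Sylow-type subgroup $\langle c\rangle\cong\Z/2$.

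The key step is therefore to apply naturality once more, now along $\gamma_c\colon \Z/2\to V$ onto $\langle c\rangle$. I would show that $\gamma_c^*\varepsilon^{(3)}(v_i)=\varepsilon^{(3)}(\gamma_c^*v_i)$ vanishes in $E^3_{1,2}(\Z/2)$. Then $\varepsilon^{(3)}(\varpi^*v_i)$ factors through the classes $\varepsilon^{(3)}(\gamma_c^* v_i)$, and the relevant part of $E^3_{1,2}(Q)$ receives the map from $E^3_{1,2}(\Z/2)$ via $C_Q(a)\to\Z/2$. The restriction $\gamma_c^*$ sends $x\mapsto it$ and $y\mapsto jt$, where $c=\tilde{\mathbf x}^i\tilde{\mathbf y}^j$. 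For example, for $v_1$:
\begin{itemize}
\item $c=\tilde{\mathbf x}$ or $c=\tilde{\mathbf y}$ gives $0$;
\item $c=\tilde{\mathbf x}\tilde{\mathbf y}$ gives $\tau(t)^2(1\otimes t)+\tau(t)^2(t\otimes 1)=\tau(t)^3$.
\end{itemize}
The class $\tau(t)^3$ is a product of three zero-divisors, so it lies in $D^4_{3,0}(\Z/2)$ by Lemma \ref{FM supp}, and its $\varepsilon^{(3)}$ vanishes. The same check applies to $v_2$ and $v_3$: each restriction is either $0$ or $\tau(t)^3$. Lemma \ref{E3 Z2}, which says $E^3_{1,2}(\Z/2)=E^1_{1,2}(\Z/2)$, lets me carry this vanishing out at the $E^1$ level without worrying about indeterminacy.

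The main obstacle is this last passage: turning componentwise vanishing at the $E^1$ level into vanishing in the subquotient $E^3_{1,2}(Q)$, given that the lift $w_i$ is only defined modulo images of $d^1$ and $d^2$. My first attempt will be to choose the lift $w_i$ compatibly under all the $\gamma_c$. Then each nonzero component of $\varpi^*w_i$ is the image of a component for $\Z/2$. Since $E^1_{1,2}(\Z/2)=E^3_{1,2}(\Z/2)$, that component is exactly zero, not merely zero modulo boundaries, so $\varpi^*w_i=0$ outright. If that compatibility fails, the fallback is a direct check in the cyclic centralizer cohomology $A[u,v]/(u^2)$. There a degree-one component arising from products of restricted classes involves $u^2=0$, which should again kill it.
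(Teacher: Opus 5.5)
Your proposal is correct and follows essentially the same route as the paper. You lift $\varepsilon^{(3)}(v_i)$ to $E^1_{1,2}(V)$, use Theorem~\ref{decomp} and Lemma~\ref{pi triv} to reduce to the diagonal components $(c,c)$, and kill these by restricting to $\langle c\rangle\cong\Z/2$, where Lemma~\ref{E3 Z2} forces the $E^1$-level element to vanish. The compatibility you flag as the main obstacle is in fact automatic: for any lift $w_i$, naturality makes $\gamma_c^*w_i$ a representative of $\varepsilon^{(3)}(\gamma_c^*v_i)=0$ in $E^3_{1,2}(\Z/2)=E^1_{1,2}(\Z/2)$, so $\gamma_c^*w_i=0$ outright and the fallback is unnecessary. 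Your explicit check that $\gamma_c^*v_i\in\{0,\tau(t)^3\}\subset D^4_{3,0}(\Z/2)$ usefully fills in the paper's ``similar arguments''.
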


\begin{proof}
    By Theorem \ref{FM Thm}, Proposition \ref{FM zero} and Lemma \ref{FM supp}, we have that $v_i\in D^3_{3, 0}(V)$ for $i = 1, 2, 3$. Then by the naturality, it follows that $\varpi^{*}(v_i) \in D^{3}_{3, 0}(Q)$. For each $i$, we take a representative $z_i\in E^1_{1, 2}(V)$ to satisfy $\varepsilon^{(3)}(v_i) = [z_i] \in E^3_{1, 2}(V)$.
    
    Since $V$ is abelian, the centralizer $C_V(c)$ equals to $V$ for any $c\in V$. We aim to determine the image $q_{(c, c)}(z_i)\in H^1(V)$ for each $c\in V\setminus\{1\}$ and $i$. As an instance, consider the case $c = \tilde{\mathbf{x}}$ and $i = 1$. Then by Theorem \ref{decomp}, the inclusion $\iota\colon \Z/2 = \langle \tilde{\mathbf{x}} \rangle \hookrightarrow V$ induces a homomorphism
    \[
    \Phi = \Phi_{(\tilde{\mathbf{x}}, \tilde{\mathbf{x}}),(\tilde{\mathbf{x}},\tilde{\mathbf{x}})}\colon H^1(V)\to H^1(\Z/2),\quad x\mapsto t\text{ and }y\mapsto 0.
    \]
    Remark that $\iota(z_1)=0\in D^4_{3,0}(\Z/2)\subset H^3({\Z/2})$. Then by Lemma \ref{E3 Z2}, we obtain $\Phi^* (q_{(\tilde{\mathbf{x}}, \tilde{\mathbf{x}})}(z_1)) = 0$ and so $q_{(\tilde{\mathbf{x}}, \tilde{\mathbf{x}})}(z_1) = ky$, where $k$ is either $0$ or $1$. By similar arguments, we get
    \[
    q_{(\tilde{\mathbf{x}}, \tilde{\mathbf{x}})}(z_i) = k_{i, 1}y,\quad
        q_{(\tilde{\mathbf{y}}, \tilde{\mathbf{y}})}(z_i) = k_{i, 2}x,\quad
        q_{(\tilde{\mathbf{xy}}, \tilde{\mathbf{xy}})}(z_i) = k_{i, 3}x+k_{i, 3}y,
    \]
    where $k_{i, j}$ is either $0$ or $1$ for $1\le i, j\le 3$. Then by Theorem \ref{decomp} and Lemma \ref{V gens},
    \[
    \Phi_{(a, b), (c, c)}(q_{(\tilde{a}, \tilde{a})}(z_i)) = 0 \quad\text{ for any }a, b \in Q\text{ with }\tilde{a} = \tilde{b} = c.
    \]
    By Theorem \ref{decomp} and Lemma \ref{pi triv}, the homomorphism $\Phi_{(a, b), (c, d)}$ is trivial for elements $a, b \in Q$ and $c, d\in V$ with $c\neq d$, $\tilde{a}\neq\tilde{b}$ or $\tilde{a}\neq c$. Then we obtain $\varpi^*(z_i) = 0\in E^1_{1,2}(Q)$, and by the naturality of the derived sequences,
    \[
    \varepsilon^{(3)}(\varpi^*(v_i)) = \varpi^*[z_i] = [\varpi^*(z_i)] = 0 \in E^3_{1, 2}(Q).
    \]
    Thus the statement is proved.
\end{proof}

We are now ready to prove Theorem \ref{main}.

\begin{proof}[Proof of Theorem \ref{main}]
    The canonical map $p\colon X = S^3/Q \to BQ$ induces an isomorphism on mod $2$-cohomology $p^*\colon H^n(Q)\xrightarrow{\cong} H^n(X)$ for $n\le 3$, and obviously $H^n(X)=0$ for $n\ge4$. Then by \eqref{HQ}, we have
    \begin{align*}
        (p^2)^*(\varpi^*(v_1)\smile\varpi^*(v_3))
        &=(p^2)^*(\varpi^*(v_1))(p^2)^*(\varpi^*(v_3))\\
        &=(1\otimes x^2y+x^2y\otimes1+xy\otimes x+x\otimes xy)(y\otimes x^2+x\otimes y^2)\\
        &= x^2y\otimes xy^2 \neq 0 \in H^6(X^2)
    \end{align*}
    \noindent
    for $m$ odd, and
    \begin{align*}
        (p^2)^*(\varpi^*(v_2)\smile\varpi^*(v_3))
        &=(p^2)^*(\varpi^*(v_2))(p^2)^*(\varpi^*(v_3))\\
        &=(x^2\otimes y + x\otimes y^2)(y\otimes x^2+x\otimes y^2)\\
        &= x^3\otimes y^3 \neq 0 \in H^6(X^2)
    \end{align*}
    \noindent
    for $m$ even. Then, for the canonical class $\mathfrak{v}_X\in H^1(X^2; I_Q)$, we obtain $\mathfrak{v}_X^6\neq 0\in H^6(X^2; I_Q^{\otimes 6})$ by Theorem \ref{FM Thm} and Lemmas \ref{FM supp}, \ref{wgt 3}, which implies
    \[
    \TC(S^3/Q)\ge \wgt(\mathfrak{v}_X)\ge 6.
    \]
    Thus since $\TC(S^3/Q)\le 2\dim (S^3/Q) = 6$, the proof is completed.
\end{proof}

\section{Final remarks}\label{rem}

We conclude this paper with some remarks on our results, and some open problems regarding the topological complexity of non-simply connected spaces. We hope that these problems precipitate further research.

First, we admit that we have not provided an explicit description of the differentials of $E^n$. While the statement of Lemma \ref{E3 Z2} implies triviality of the differentials, it is possible only because the group $G = \Z/2$ and the set of tuples $(G\setminus\langle1\rangle)^s$ are enough simple. B\l aszczyk et al. \cite{BCE} mentioned the first differential $d^1$ in more general setting, yet their description is not practical for computation. Therefore, it is essential to investigate the differentials and $E^n$-terms for $n\ge 2$. We believe that it concerns with Farrell cohomology theory, as a spectral sequence in \cite[X. (4.1)]{Br} has some resemblance to \eqref{cent iso}.

Next, we remark on the determination of $\TC(S^n/G)$ for each free action $G\curvearrowright S^n$. Finite groups that act freely on spheres are completely characterized by Madsen et al. \cite{MTW}. In particular, Milnor \cite{M} classified a finite group $G$ acting freely on $S^3$.
\begin{question}
    For each finite group $G$ acting freely on $S^3$, determine $\TC(S^3/G)$. More generally, determine $\TC(S^n/G)$ for each finite group $G$ with free action on sphere.
\end{question}
\noindent
The first question appears to be difficult to address, since the Poincar\'{e} sphere $S^3/(2I)$ has the same integral cohomology as $S^3$. The second quection is more intractable; even the topological complexity of the lens space $L^{2n+1}_p = S^{2n+1}/(\Z/p)$ is not completely determined. We expect that our method contributes to further studies on this problem.

Finally, we mention further generalizations to address various types of motion planning problem. There have been many analogs of topological complexity considered. The most significant one is the sequential (or higher) topological complexity introduced by Rudyak \cite{R}, which arose from the robot motion planning passing through several intermediate points; for a space $X$ and an integer $r\ge 2$, the invariant is restated as $\TC_r(X) : = \secat(\Delta\colon X\to X^r)$, where $\Delta$ denotes the diagonal. Farber and Oprea \cite{FO} asked:
\begin{question}
    \label{growth}
    For which space $X$, is the formal series $\sum_{r=1}^{\infty}\TC_{r+1}(X)z^{r}$ a rational function $P(z)/{(1-z)^2}$ such that $P(z)$ is a polynomial with $P(1)= \cat(X)$?
\end{question}
\noindent
Notice that there exists a family of spaces not satisfying the above condition \cite{FKS}. As a recent progress, \cite{ACGHV} collected affirmative cases of spaces with finite cyclic fundamental group. We now state a conjecture:
\begin{conjecture}
    For $m\ge 1$, the $r$-th sequential topological complexity of $S^3/Q_{8m}$ is equal to $3r$, supporting Question \ref{growth}. 
\end{conjecture}
\noindent
We also anticipate that our method will be applied to other variants, for instance, parametrized topological complexity introduced by Cohen et al. \cite{CFW}.

\end{document}